\theoremstyle{plain}
\newtheorem{theorem}{Theorem}[section]
\newtheorem{lemma}[theorem]{Lemma}
\newtheorem{definition}[theorem]{Definition}
\newtheorem{proposition}[theorem]{Proposition}
\newtheorem{question}[theorem]{Question}
\theoremstyle{remark}
\newtheorem{remark}{Remark}[section]
\newtheorem{example}{Example}
\newcommand{\Mod}{\mbox{\rm Mod}}
\begin{document}

\date{}
\title[Parabolic isometry]
{Parabolic isometries of visible CAT(0) spaces and metrics on moduli space}

\author{Yunhui Wu}

%\address{Department of Mathematics, Brown University, Box 1917, Providence, RI 021912, U.S.A}
\address{Department of Mathematics, Rice University, 6100 Main St, Houston, TX 77005}

\email{yw22@rice.edu}

\begin{abstract}
We show that the translation length of any parabolic isometry on a complete semi-uniformly visible CAT(0) space is always zero. As a consequence, we will classify the isometries on visible CAT(0) spaces in terms of translation lengths. We will also show that the moduli space $\mathbb{M}(S_{g,n})$ of surface $S_{g,n}$ of $g$ genus with $n$ punctures admits no complete visible CAT(0) Riemannian metric if $3g+n\geq 5$, which answers the Brock-Farb-McMullen question in the visible case. 
\end{abstract}

\maketitle

\section{Introduction}

CAT(0) spaces are generalizations of nonpositively curved Riemannian manifolds. Nonsmooth spaces, which are also called singular spaces, may occur in CAT(0) spaces. A classical example of a singular CAT(0) space is a tree, which is an one-dimensional graph without loops. The vertices are the singular set. A CAT(0) space could be no proper, that is, not locally compact. The properness of a tree depends on whether it is locally finite. Actually a locally infinite tree is the simplest example for CAT(0) spaces which are not locally compact. The first part of this paper will focus on parabolic isometries of complete CAT(0) spaces which may be no proper.  

Let $M$ be a complete CAT(0) space. An isometry $\gamma$ of $M$ is a map $\gamma:M \to M$ which satisfies $dist(\gamma \cdot p, \gamma \cdot q)=dist(p,q)$, for all $p,q\in M$. The set of isometries on a metric space is a group. An isometry can be classified as elliptic, hyperbolic, or parabolic. An isometry is called \textsl{elliptic} if it has a fixed point in $M$. Any finite order element in the isometry group of a complete CAT(0) space is always elliptic (see lemma \ref{fpt} in section \ref{np}). An isometry $\gamma$ is called \textsl{hyperbolic} if there exists a geodesic line $c: (-\infty,+\infty)\rightarrow M$ such that $\gamma$ acts on $c(\mathbb{R})$ by a non-trivial translation. The fundamental group of a closed nonpositively curved Riemannian manifold consists of hyperbolic isometries except the unit. If an isometry is neither elliptic nor hyperbolic, then we call it to be \textsl{parabolic}. Parabolic isometries may occur in the fundamental group of an open nonpositively curved Riemannian manifold.

Let $\gamma$ be an isometry of a complete CAT(0) space $M$. We define the \textsl{translation length} $|\gamma|$ of $\gamma$  as 
\[|\gamma|:=\inf_{p\in M} dist(\gamma \cdot p,p).\] 
From the definition of translation length, $|\gamma|$ may be not achieved. If $|\gamma|$ is achieved in $M$, $|\gamma|=0$ corresponds to the elliptic case, and $|\gamma|>0$ corresponds to the hyperbolic case. If $|\gamma|$ can not be achieved in $M$, $\gamma$ is parabolic. One can see more details in \cite{BH,BGS}. Let us look at the following two examples. Let $\mathbb{H}^2$ be the upper half plane endowed with the hyperbolic metric and define $\gamma :\mathbb{H}^2\rightarrow \mathbb{H}^2$ to be $\gamma\cdot(x,y)=(x+1,y)$. It is easy to see that $|\gamma|=0$, which can not be achieved in $\mathbb{H}$. Thus, $\gamma$ is parabolic. Similarly consider $\mathbb{R}\times \mathbb{H}^2$ and define $\gamma :\mathbb{R}\times \mathbb{H}^2\rightarrow \mathbb{R}\times \mathbb{H}^2$ to be $\gamma\cdot(z,(x,y))=(z+1,(x+1,y))$. It is easy to see that $\gamma$ is parabolic and $|\gamma|=1$. So parabolic isometry with positive translation length may occur in CAT(0) spaces. 

A visible CAT(0) space, introduced by Eberlein and O'Neill in \cite{EO}, needs the space to be more curved. In some sense it means that for any two different points at ``infinity", they can be viewed from each other along the space, which can not happen in $\mathbb{R}^{n}$. We call a complete CAT(0) space $M$ is \textsl{visible} if for any $x\neq y \in M(\infty)$ there exists a geodesic line $c:(-\infty,+\infty)\to M$ such that $c(-\infty)=x$ and $c(+\infty)=y$, where $M(\infty)$ is the visual boundary of $M$ consisting of asymptotic geodesic rays (see \cite{BH} for details). In particular, a complete CAT(0) space, whose visual boundary is empty, is a visible space. When the visual boundary is not empty, classical examples for visible spaces include trees, which are singular, and complete simply connected Riemannian manifolds whose sectional curvatures are bounded above by negative numbers, which are smooth.  

For the example $\mathbb{R}\times \mathbb{H}^2$ above which contains parabolic isometries with positive translation lengths, there exists embedding totally geodesical Euclidean planes in it. In particular, $\mathbb{R}\times \mathbb{H}^2$ is not visible. It was shown in \cite{Wu12} that any parabolic isometry of a complete proper visible CAT(0) space has zero translation length, which was also served as a bridge to find a counterexample for the Eberlein conjecture which said that \textsl{a complete open manifold $M$ with sectional curvature $-1\leq K_{M}\leq 0$ and finite volume is visible if the universal covering space $\tilde{M}$ of $M$ contains no imbedded flat half planes}. And the proof in \cite{Wu12} requires that the space is proper. So it is natural to ask
\begin{question}\label{gq-1}
Does every parabolic isometry of a complete visible CAT(0) space have zero translation length?
\end{question}

In general, the answer to the question above is not true, one can see the example in remark \ref{tushar}. If we just consider complete visible CAT(0) spaces, which may be no proper, we still would like to know what kind of spaces such that parabolic isometries have zero translation length. A semi-uniformly visible CAT(0) space, roughly speaking, means a visible CAT(0) space which can not contain flat strips with arbitrary large widths, one can see definition \ref{dosuvs} for details. The motivation that we introduce semi-uniformly visible CAT(0) spaces is that the counterexample in remark \ref{tushar} for question \ref{gq-1} contains flat strips with arbitrary large widths. Proposition \ref{sosuv} in section 2 tells that both complete proper visible CAT(0) spaces and complete Gromov-hyperbolic CAT(0) spaces are semi-uniformly visible CAT(0) spaces. The following theorem gives an affirmative answer to question \ref{gq-1} for semi-uniformly visible CAT(0) spaces.  

\begin{theorem}\label{mt-1}
Let $M$ be a complete semi-uniformly visible CAT(0) space. Then any parabolic isometry has zero translation length. i.e., for any parabolic isometry $\gamma$ on $M$ we have $|\gamma|=0$.
\end{theorem}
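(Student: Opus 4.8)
The plan is to argue by contradiction: assume $\gamma$ is parabolic and $|\gamma|=\ell>0$, and then produce either a $\gamma$--invariant geodesic line or a point of $M$ fixed by $\gamma$, each of which contradicts parabolicity.

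First I would extract a minimizing sequence that escapes to infinity. The displacement function $d_{\gamma}(x):=dist(\gamma\cdot x,x)$ is convex and continuous; since $\gamma$ is parabolic it attains no minimum, and since a continuous convex function with a bounded sublevel set on a complete CAT(0) space does attain its infimum (via the circumcenter construction, as in the proof that a bounded orbit forces a fixed point), the sublevel sets $\{d_{\gamma}\le\ell+\epsilon\}$ are unbounded. Fixing a basepoint $o$, I therefore choose $x_{n}\in M$ with $L_{n}:=dist(x_{n},\gamma\cdot x_{n})\downarrow\ell$ and $dist(o,x_{n})\to\infty$.

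Next, for each $n$ form the bi-infinite, $\gamma$--equivariant broken geodesic $\Gamma_{n}$ with vertices $\gamma^{k}\cdot x_{n}$, $k\in\mathbb{Z}$, and edges $[\gamma^{k}\cdot x_{n},\gamma^{k+1}\cdot x_{n}]$ of common length $L_{n}$ (the interior angle at every vertex is then a single number $\theta_{n}$). Two observations turn $\Gamma_{n}$ into an ``almost axis'' of $\gamma$ as $n\to\infty$: (i) from $dist(x_{n},\gamma^{m}\cdot x_{n})\le mL_{n}$ together with the standard identity $\lim_{m}\tfrac{1}{m}|\gamma^{m}|=|\gamma|$ for isometries of CAT(0) spaces (see \cite{BH}), the long sub-arcs $[\gamma^{-N}\cdot x_{n},\gamma^{N}\cdot x_{n}]$ of $\Gamma_{n}$ are $(1+\delta_{n})$--efficient with $\delta_{n}\to0$, so in particular $\theta_{n}\to\pi$; (ii) by convexity of the distance between two geodesics, $\gamma$ moves each such sub-arc to within distance $L_{n}$ of itself, so $\Gamma_{n}$ is uniformly thin along its whole length.

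The heart of the argument, and the step I expect to be the main obstacle, is to straighten $\Gamma_{n}$ into a genuine $\gamma$--invariant geodesic line. This is exactly where visibility, rather than mere nonpositive curvature, is indispensable: in $\mathbb{R}^{2}$ an efficient broken geodesic can spiral, with no geodesic line near it, and without local compactness one cannot simply pass to a limit of the $\Gamma_{n}$. The plan is to establish --- and this is where the hypotheses are used --- that in a complete semi-uniformly visible CAT(0) space the forward and backward rays of such an efficient broken geodesic fellow-travel honest geodesic rays, with a fellow-travelling constant depending only on the flat-strip width bound $D$ supplied by semi-uniform visibility; this yields distinct boundary points $\xi_{n}^{+},\xi_{n}^{-}\in M(\infty)$ (distinct because the two ends of $\Gamma_{n}$ diverge linearly), a geodesic line $c_{n}$ joining them by visibility, and a uniform bound on $dist(\Gamma_{n},c_{n})$. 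Because the construction is $\gamma$--equivariant, $\gamma$ permutes $\{\xi_{n}^{+},\xi_{n}^{-}\}$ and preserves $c_{n}$: if it fixes both endpoints it translates $c_{n}$ by a positive amount and is hyperbolic, and if it interchanges them it fixes the midpoint of $c_{n}$ and is elliptic --- in either case $\gamma$ is not parabolic, a contradiction. Hence $\ell=0$. Thus the whole weight of the proof rests on making this straightening quantitative, i.e.\ on squeezing the boundary points and the fellow-travelling estimate out of visibility and the flat-strip bound alone, which is precisely what the proper visible case of \cite{Wu12} gets for free from local compactness.
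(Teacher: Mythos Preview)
Your outline has two genuine gaps. The first is the straightening step you flag yourself: you never actually show that the broken orbit path $\Gamma_{n}$ determines two distinct points $\xi_{n}^{\pm}\in M(\infty)$, let alone that a visibility line $c_{n}$ stays uniformly close to it. You appeal to a ``flat-strip width bound $D$ supplied by semi-uniform visibility'', but the definition gives no such uniform $D$: it only says that for \emph{each} geodesic line the transverse factor of its parallel set is bounded, with no control from line to line. Without local compactness there is no Arzel\`a--Ascoli to fall back on, so this step is not just technical bookkeeping --- it is the whole theorem. The second gap is in the last paragraph: even if $\gamma$ fixes both $\xi_{n}^{+}$ and $\xi_{n}^{-}$, it does \emph{not} follow that $\gamma$ preserves the particular line $c_{n}$ you picked; $\gamma\cdot c_{n}$ is only parallel to $c_{n}$. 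To conclude that $\gamma$ is semisimple from two fixed points at infinity you still need to analyze the parallel set $P_{c_{n}}\cong P_{c_{n}}'\times\mathbb{R}$, use that $P_{c_{n}}'$ is bounded (this is where semi-uniform visibility actually enters), and split $\gamma$ accordingly.

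The paper's argument bypasses the straightening entirely by invoking the Karlsson--Margulis multiplicative ergodic theorem: if $|\gamma|>0$ then for every basepoint $p$ there is a point $x\in M(\infty)$ with $\lim_{n}\tfrac{1}{n}\,dist(\gamma^{n}\cdot p,\,c(n|\gamma|))=0$ along the ray $c$ to $x$; the same applied to $\gamma^{-1}$ gives $y\in M(\infty)$, and a short computation with the angular metric formula $2\sin(\angle(x,y)/2)=\lim_{t}\tfrac{1}{t}dist(c(t),c'(t))$ together with $|\gamma^{2}|=2|\gamma|$ forces $\angle(x,y)=\pi$, hence $x\neq y$. This produces the two fixed boundary points for free, in any complete CAT(0) space. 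The contradiction then comes from the parallel-set argument sketched above (which is exactly the step your final paragraph still owes). So the moral is: rather than trying to manufacture an axis by hand from an almost-minimizing orbit, let Karlsson--Margulis hand you the endpoints directly and spend the hypothesis only on showing a parabolic cannot fix two of them.
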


Buyalo in \cite{BU} proved theorem \ref{mt-1} when $M$ is a complete Gromov-hyperbolic CAT(0) space. Since a complete Gromov-hyperbolic CAT(0) space is a complete semi-uniformly visible CAT(0) space (see proposition \ref{gmhiv} in section \ref{np}), theorem \ref{mt-1} gives a generalization. And the method here is different as what Buyalo did in \cite{BU}.  For the manifold case, Bishop and O'Neill in \cite{BO} proved that any parabolic isometry of a complete simply connected manifold M with sectional curvature $K_{M}\leq -1$ has zero translation length. In \cite{HH} Heintze and Hof used the geometry on horospheres to give a new proof for the manifold case.
\newline 

%Complete visible CAT(0) spaces may contain parabolic isometry with positive translation length, one can see the examples in the Appendix.   

As a consequence of theorem \ref{mt-1}, we have the following classification on isometries of a complete semi-uniformly visible CAT(0) space in terms of translation lengths.

\begin{theorem}\label{class}
Let $M$ be a complete semi-uniformly visible CAT(0) space, and $\gamma$ be an isometry of $M$. Then,

(1): $\gamma$ is elliptic if and only if $\gamma$ has a fixed point in $M$.

(2): $\gamma$ is hyperbolic if and only if $|\gamma|>0$.

(3): $\gamma$ is parabolic if and only if $|\gamma|=0$ and $\gamma$ does not have any fixed point in $M$.
\end{theorem}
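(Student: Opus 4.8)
The plan is to read off all three equivalences from Theorem \ref{mt-1} together with two elementary facts: an elliptic isometry has zero translation length, a hyperbolic isometry has positive translation length, and the three classes are --- by the very definition of ``parabolic'' --- mutually exclusive and exhaustive. Once these are in hand, each statement becomes pure bookkeeping with the trichotomy.

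Part (1) is essentially the definition: $\gamma$ is elliptic exactly when it fixes some point $x\in M$, and then $|\gamma|\le dist(\gamma\cdot x,x)=0$, so every elliptic isometry satisfies $|\gamma|=0$. I record this observation for use below.

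For part (2) the only nontrivial ingredient is that a hyperbolic isometry $\gamma$ has $|\gamma|>0$; in fact $|\gamma|$ equals the translation amount $\ell>0$ of $\gamma$ along an axis $c$. Here I would use that $A:=c(\mathbb{R})$ is a complete convex $\gamma$-invariant subset of $M$, so the nearest-point projection $\pi\colon M\to A$ is well defined, is $1$-Lipschitz, and commutes with $\gamma$ because $\gamma$ preserves $A$. Hence for every $p\in M$,
\[
dist(\gamma\cdot p,p)\ \ge\ dist\big(\pi(\gamma\cdot p),\pi(p)\big)=dist\big(\gamma\cdot\pi(p),\pi(p)\big)=\ell,
\]
the last equality because $\gamma$ translates each point of $A$ by exactly $\ell$; combined with $dist(\gamma\cdot c(0),c(0))=\ell$ this gives $|\gamma|=\ell>0$, proving the ``only if'' direction. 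For ``if'': if $|\gamma|>0$ then $\gamma$ is not elliptic by part (1), and not parabolic by Theorem \ref{mt-1}, so --- since every isometry is elliptic, hyperbolic, or parabolic --- it must be hyperbolic.

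Part (3) then follows with no further work. If $\gamma$ is parabolic it is not elliptic, hence has no fixed point, and Theorem \ref{mt-1} gives $|\gamma|=0$. Conversely, if $|\gamma|=0$ and $\gamma$ has no fixed point, then $\gamma$ is neither elliptic (no fixed point) nor hyperbolic (that would force $|\gamma|>0$ by part (2)), hence parabolic. I do not anticipate a genuine obstacle: the substantive content is entirely carried by Theorem \ref{mt-1}, and the only point requiring a little care is the honest disjointness of the three classes, which is precisely what the two elementary facts about $|\gamma|$ supply.
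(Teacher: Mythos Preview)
Your proposal is correct and follows essentially the same route as the paper: both proofs reduce everything to Theorem~\ref{mt-1} plus the trichotomy of isometry types, with the key step in part~(2) being that $|\gamma|>0$ forces $\gamma$ to be hyperbolic by eliminating the parabolic case via Theorem~\ref{mt-1}. The only difference is that you supply an explicit projection argument to show that a hyperbolic isometry has $|\gamma|>0$ (and in fact $|\gamma|$ equals the translation amount along an axis), whereas the paper simply cites this as ``by the definition''; your extra care is justified since the paper's definition of hyperbolic is via the existence of an axis, not via the translation length, so this implication does require a short argument.
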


Let $S_{g,n}$ be a surface of $g$ genus with $n$ punctures, and $\mathbb{M}(S_{g,n})$ be the moduli space of $S_{g,n}$. There are a lot of canonical metrics on $\mathbb{M}(S_{g,n})$ like the Teichm\"uller metric, the Weil-Petersson metric, and so on (see \cite{IT92}). Kravetz in \cite{Kravetz} proved the Teichm\"uller metric has negative curvature. However, a mistake in the proof of Kravetz's theorem was found by Linch in \cite{Linch}. Masur in \cite{Masur75} proved that the Teichm\"uller metric is not nonpositively curved except several cases. Although the Weil-Petersson metric is negatively curved (see \cite{Wolpert86}), but not complete (see \cite{Wolpert75}). The McMullen metric, constructed by McMullen in \cite{McMullen00}, is a complete K\"ahler-hyperbolic metric in the sense of Gromov. Liu, Sun and Yau in \cite{LSY05} perturbed the Weil-Petersson metric to construct the so-called perturbed Ricci metric, which is complete and whose Ricci curvatures are pinched by two negative numbers. There is a question in Brock-Farb's paper \cite{BF} which states
\begin{question}[Brock-Farb-McMullen]
Does $\mathbb{M}(S_{g,n})$ admit a complete, nonpositively curved Riemannian metric?
\end{question}

Since $\mathbb{M}(S_{g,n})$ is an orbifold, here a Riemannian metric on $\mathbb{M}(S_{g,n})$ means a Riemannian metric on the Teichm\"uller space $\mathbb{T}(S_{g,n})$, the universal covering space of $\mathbb{M}(S_{g,n})$, on which the natural action of the mapping class group $\Mod_{S_{g,n}}$ on $\mathbb{T}(S_{g,n})$ is an isometric action. 

We call a Riemannian metric on $\mathbb{M}(S_{g,n})$ is called \textsl{visible CAT(0) Riemannian} if the sectional curvature of $\mathbb{T}(S_{g,n})$ is nonpositive and $\mathbb{T}(S_{g,n})$ is visible. If $3g+n\leq 4$ and $\mathbb{M}(S_{g,n})$ has positive dimension, then $(g,n)$ must be one of $\{(1,0),(1,1),(0,4)\}$. For these three cases, it is well known that the Teichm\"uller metric on $\mathbb{M}(S_{g,n})$ is a complete hyperbolic metric (see \cite{FW10}). In particular, the Teichm\"uller metric on $\mathbb{M}(S_{g,n})$ is complete visible CAT(0) Riemannian. Hence, we can always assume that $3g+n\geq 5$ for Brock-Farb-McMullen's question. The following theorem answers Brock-Farb-McMullen's question in the visible case. 

\begin{theorem}\label{mt-2}
If $3g+n\geq 5$, then $\mathbb{M}(S_{g,n})$ admits no complete visible CAT(0) Riemannian metric.  
\end{theorem}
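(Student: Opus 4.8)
The plan is to assume that $\mathbb{M}(S_{g,n})$ carries a complete visible CAT(0) Riemannian metric and to derive a contradiction from the wealth of commuting Dehn twists in $\Mod_{S_{g,n}}$ together with Theorem \ref{mt-1}. Unravelling the definitions, the hypothesis equips $\mathbb{T}(S_{g,n})$ with a complete $\Mod_{S_{g,n}}$-invariant Riemannian metric of nonpositive sectional curvature which is visible; since $\mathbb{T}(S_{g,n})$ is diffeomorphic to a Euclidean space, this makes it a Hadamard manifold, hence a complete \emph{proper} CAT(0) space, hence semi-uniformly visible by Proposition \ref{sosuv}, so that Theorems \ref{mt-1} and \ref{class} apply to it. Moreover $\Mod_{S_{g,n}}$ acts isometrically and properly discontinuously, so every infinite-order mapping class acts as a non-elliptic isometry, hence (Theorem \ref{class}) as a hyperbolic or parabolic one.

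Since $3g+n\ge5$, i.e. $3g-3+n\ge2$, the surface $S_{g,n}$ carries two disjoint, non-isotopic, essential simple closed curves $\alpha,\beta$, and the commuting twists $T_\alpha,T_\beta$ generate a subgroup $A\cong\mathbb{Z}^2$. I would first establish the dichotomy that either every nontrivial element of $A$ is hyperbolic or every one is parabolic: if $a\in A$ is hyperbolic then $\mathrm{Min}(a)$ splits isometrically as $Y\times\mathbb{R}$, every isometry commuting with $a$ respects this splitting, and whether such an isometry is hyperbolic or parabolic is read off from its $Y$-component and is unchanged under multiplication by $a$; choosing a basis of $A$ containing a primitive hyperbolic (respectively parabolic) element then forces the dichotomy. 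If every nontrivial element of $A$ is hyperbolic, the Flat Torus Theorem (\cite{BH}) produces a closed convex subset of $\mathbb{T}(S_{g,n})$ isometric to $Y_0\times\mathbb{E}^2$ on which $A$ acts by a lattice of translations of the $\mathbb{E}^2$-factor; thus $\mathbb{T}(S_{g,n})$ contains an isometrically embedded flat plane, which a visible CAT(0) space cannot (two non-parallel geodesics of the plane have distinct ends which no geodesic line can join, exactly as for $\mathbb{R}\times\mathbb{H}^2$ in the introduction). This disposes of the all-hyperbolic case.

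Hence every nontrivial element of $A$ is parabolic, and running the same dichotomy over every disjoint pair of curves shows that \emph{every} Dehn twist $T_\gamma$ is parabolic, whence $|T_\gamma|=0$ by Theorem \ref{mt-1}. In the proper visible manifold $\mathbb{T}(S_{g,n})$ a parabolic isometry fixes exactly one boundary point: at least one, by applying compactness of the visual compactification to a sequence realizing the unattained infimum defining $|T_\gamma|=0$; at most one, since two distinct fixed boundary points would be joined by a geodesic line whose parallel set --- a $T_\gamma$-invariant totally geodesic submanifold splitting as $Z\times\mathbb{R}$ with $Z$ a Hadamard manifold --- would contain a flat plane unless $\dim Z=0$, and when $\dim Z=0$ the isometry $T_\gamma$ preserves a single line and is therefore hyperbolic or elliptic. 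Write $\xi_\gamma$ for this unique fixed point. If $\gamma,\delta$ are disjoint then $T_\delta$ commutes with $T_\gamma$, hence fixes the unique fixed point $\xi_\gamma$ of $T_\gamma$, so $\xi_\gamma=\xi_\delta$; thus $\gamma\mapsto\xi_\gamma$ is constant on adjacent vertices of the (connected) curve complex of $S_{g,n}$, so all the $\xi_\gamma$ coincide with a single point $\xi^*\in\mathbb{T}(S_{g,n})(\infty)$, and the equivariance $\xi_{f\gamma}=f\cdot\xi_\gamma$ then forces $\Mod_{S_{g,n}}\cdot\xi^*=\xi^*$. As $\Mod_{S_{g,n}}$ admits no nontrivial homomorphism to $\mathbb{R}$, the Busemann character at $\xi^*$ vanishes, so every mapping class is a neutral parabolic or an elliptic isometry fixing $\xi^*$; in particular the action has no hyperbolic element.

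The main obstacle is to rule out such an action. Passing to the horoball $B=\{b_{\xi^*}\le0\}$, which is a $\Mod_{S_{g,n}}$-invariant complete proper CAT(0) subspace, one checks that its visual boundary is the single point $\xi^*$: every ray in $B$ runs to $\xi^*$, because along any ray converging to a different boundary point $\eta$ the function $b_{\xi^*}$ tends to $+\infty$ (the geodesic line from $\xi^*$ to $\eta$ supplied by visibility has $b_{\xi^*}$-slope $+1$, and any ray to $\eta$ is asymptotic to it), so that ray eventually leaves $B$. Hence $B$ admits no hyperbolic isometry at all, $B$ is again proper and visible, hence semi-uniformly visible, and $\Mod_{S_{g,n}}$ acts on $B$ properly discontinuously by parabolic and elliptic isometries fixing a single boundary point. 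I would finish by proving that a group admitting such an action must be amenable --- or at least virtually solvable --- in the style of the Adams--Ballmann structure theory for isometry groups of Hadamard spaces without hyperbolic elements, which contradicts the fact that $\Mod_{S_{g,n}}$ contains a non-abelian free subgroup (for instance one generated by two Dehn twists about curves meeting at least twice, which we have just shown to consist of parabolic isometries). Establishing this rigidity statement without cocompactness, while iterating the passage to horoballs so that Theorem \ref{mt-1} keeps applying at each stage, is where I expect the genuine difficulty to lie.
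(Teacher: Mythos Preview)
Your argument through the construction of the global fixed point $\xi^*$ and the conclusion that every infinite-order mapping class acts parabolically essentially reproduces the paper's Propositions~\ref{fpfm} and~\ref{mcape}, with cosmetic differences: you invoke the Flat Torus Theorem and the vanishing of the Busemann homomorphism, while the paper argues via Lemma~\ref{z2ip} and a conjugation trick against a pseudo-Anosov element. Up to this point you are aligned with the paper.

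The gap is your endgame, and you identify it yourself. The assertion that a group acting properly on a proper visible CAT(0) space by parabolic isometries with a common fixed boundary point must be amenable (or virtually solvable) is not a theorem you can quote; Adams--Ballmann-type results run in the opposite direction, taking amenability as hypothesis rather than conclusion. Iterating horoballs does not obviously terminate or improve the situation, and there is no evident mechanism by which Theorem~\ref{mt-1} alone forces such structural restrictions on the group.

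The paper closes the argument by a different, quantitative route. Instead of the horoball, it uses the \emph{horosphere} $H$ at $\xi^*$: a torsion-free finite-index subgroup $G\le\Mod_{S_{g,n}}$ (Proposition~\ref{mcgp}(3)) acts properly on $H$ (Proposition~\ref{mcghs}), and in a Hadamard manifold every horosphere is diffeomorphic to $\mathbb{R}^{\dim M-1}=\mathbb{R}^{6g-7+2n}$ (Proposition~\ref{hsie}), hence is a contractible manifold. This gives $actdim(G)\le 6g-7+2n$. But by Despotovic's computation of the obstructor dimension (Proposition~\ref{adom}), $actdim(G)=6g-6+2n$, a contradiction. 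The missing idea in your proposal is precisely this appeal to action/obstructor dimension; without it, or something equally quantitative about the horosphere, the proof does not close.
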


Since a complete Riemannian manifold with sectional curvatures bounded above by a negative number is a visible CAT(0) manifold, the following theorem follows immediately from theorem \ref{mt-2}.
\begin{theorem}\label{ccnba}
If $3g+n\geq 5$, $\mathbb{M}(S_{g,n})$ admits no complete Riemannian metric such that the sectional curvature $K(\mathbb{M}(S_{g,n}))\leq -1$.
\end{theorem}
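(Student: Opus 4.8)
The plan is to deduce Theorem \ref{ccnba} directly from Theorem \ref{mt-2} by checking that a metric of the hypothesized type is, in particular, a complete visible CAT(0) Riemannian metric in the sense defined above. Suppose, for contradiction, that for some pair $(g,n)$ with $3g+n\geq 5$ the moduli space $\mathbb{M}(S_{g,n})$ admits a complete Riemannian metric with sectional curvature $K(\mathbb{M}(S_{g,n}))\leq -1$. Unwinding the definition of a Riemannian metric on the orbifold $\mathbb{M}(S_{g,n})$, this means that the Teichm\"uller space $\mathbb{T}(S_{g,n})$ carries a complete $\Mod_{S_{g,n}}$-invariant Riemannian metric whose sectional curvature is everywhere $\leq -1$.

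First I would recall that $\mathbb{T}(S_{g,n})$ is diffeomorphic to a Euclidean space, and in particular is simply connected. By the Cartan--Hadamard theorem, a complete simply connected Riemannian manifold of nonpositive sectional curvature is a CAT(0) space (see \cite{BH}); since $-1<0$, the metric above makes $\mathbb{T}(S_{g,n})$ into a CAT(0) manifold. Next I would invoke the classical criterion of Eberlein and O'Neill \cite{EO}: a complete simply connected Riemannian manifold whose sectional curvature is bounded above by a negative constant satisfies the visibility axiom. Applying this with the curvature bound $-1$ shows that $\mathbb{T}(S_{g,n})$ is visible. Hence the assumed metric is a complete visible CAT(0) Riemannian metric on $\mathbb{M}(S_{g,n})$, which contradicts Theorem \ref{mt-2}, and the theorem follows.

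There is essentially no serious obstacle here: the only step requiring any care is the implication ``sectional curvature pinched above by a negative constant $\Rightarrow$ visibility,'' and this is precisely the Eberlein--O'Neill criterion, so it can be quoted rather than reproved. The genuine mathematical content lies entirely in Theorem \ref{mt-2}; Theorem \ref{ccnba} is merely a corollary obtained by this short reduction.
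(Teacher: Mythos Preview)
Your argument is correct and matches the paper's own proof exactly: the paper states (just before Theorem~\ref{ccnba}) that a complete simply connected Riemannian manifold with sectional curvature bounded above by a negative number is a visible CAT(0) manifold, and then obtains Theorem~\ref{ccnba} as an immediate consequence of Theorem~\ref{mt-2}. Your version simply spells out the Cartan--Hadamard and Eberlein--O'Neill ingredients behind that sentence.
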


Ivanov in \cite{Iv88} showed that $\mathbb{M}(S_{g,n})$ $(3g+n\geq 5)$ admits no complete, finite volume Riemannian metric whose sectional curvature is pinched by two negative numbers. McMullen in \cite{McMullen00} stated that $\mathbb{M}(S_{g,n})$ $(3g+n\geq 5)$ admits no complete Riemannian metric whose sectional curvature is pinched by two negative numbers, which was proved by Brock and Farb in \cite{BF}. Moreover, the authors in \cite{BF} showed that  $\mathbb{M}(S_{g,n})$ $(3g+n\geq 5)$ admits no complete, finite volume Riemannian metric such that the universal covering space is Gromov-hyperbolic. For the visible and finite volume case, one can refer to \cite{Wu12}. What is new for theorem \ref{mt-2} and \ref{ccnba} is that there is no finite volume restriction. For related topics, one can also see \cite{Ji02, KM99, KN04, LSY04, McP99, MW95}.    
\newline

Throughout this paper, we always assume that the geodesics use arc-length parameters.
\newline

\textbf{Plan of the paper.} In section \ref{np} we set out necessary backgrounds, and prove some basic properties on CAT(0) spaces and mapping class groups, which will be applied in subsequent sections. Section \ref{mt12} establishes theorem \ref{mt-1} and theorem \ref{class}. Theorem \ref{mt-2} is proved in section \ref{somp}. 

\ \
\newline
\textbf{Acknowledgments.} The author is indebted to Andy Putman for the discussions on the proof of theorem \ref{mt-2}, in particular for his suggestion on writing this article and correction on English for the original manuscript. Thank also to Benson Farb and Mike Wolf for the discussions on the proof of theorem \ref{mt-2}.  The author also would like to thank Tushar Das for the discussions on theorem \ref{mt-1} and sharing his idea in remark \ref{tushar}.

\section{Notations and Preliminaries}\label{np}

\subsection{CAT(0) spaces} A CAT(0) space is a geodesic metric space in which each geodesic triangle is no fatter than a triangle in the Euclidean plane with the same edge lengths. 
\begin{definition} 
let $M$ be a geodesic metric space. For any $a,b,c \in M$, three geodesics $[a,b],[b,c],[c,a]$ form a geodesic triangle $\Delta.$ Let $\overline{\Delta}(\overline{a},\overline{b},\overline{c})\subset \mathbb{R}^2 $ be a triangle in the Euclidean plane with the same edge lengths as $\Delta$. Let $p,q$ be points on $[a,b]$ and $[a,c]$ respectively, and let $\overline{p},\overline{q}$ be points on $[\overline{a},\overline{b}]$ and $[\overline{a},\overline{c}]$ respectively, such that $dist_{M}(a,p)=dist_{\mathbb{R}^2}(\overline{a},\overline{p}), dist_{M}(a,q)=dist_{\mathbb{R}^2}(\overline{a},\overline{q})$. We call $M$ a \textbf{CAT(0) space} if for all $\Delta$ the inequality $dist_{M}(p,q)\leq dist_{\mathbb{R}^2}(\overline{p},\overline{q})$ holds.
\end{definition}

Let $M$ be a complete CAT(0) space. The ideal boundary, denoted by $M(\infty)$, consists of asymptotic rays. For each point $p \in M$ and $x \in M(\infty)$, since the distance function between geodesics is convex, it is not hard to see that there exists a unique geodesic ray $c$ which represents $x$ and starts from $p$ (see \cite{BH}). We write $c(+\infty)=x$. Although a complete CAT(0) space may be singular, the definition of CAT(0) spaces can guarantee that the notation of the angle, like the smooth case, still make sense (see \cite{BH}).  Given two points $x,y$ in $M(\infty)$. Let $\angle_{p}(x,y)$ denote the angle at $p$ between the unique geodesics rays which issue from $p$ and lie in the classes $x$ and $y$ respectively. The angular metric is defined to be $\angle(x,y):=\sup_{p\in M}{\angle_{p}(x,y)}$. It is easy to see that $\angle(x,y)=0$ if and only if $x=y$. On a complete visible CAT(0) space $M$, for any $x\neq y \in M(\infty)$, $\angle(x,y)=\pi$. So the angular metric gives a discrete topology on the ideal boundary of a complete visible CAT(0) space. 

The following lemma will be used in next section, which gives us a way to compute the angular metric.
\begin{lemma}\label{pro:angular}
Let $M$ be a complete CAT(0) space with a basepoint $p$. Let $x,y \in M(\infty)$ and $c,c'$ be two geodesic rays with $c(0)=c'(0)=p,$ $c(+\infty)=x$ and $c'(+\infty)=y$. Then,
\begin{eqnarray*}
2\sin(\frac{\angle(x,y)}{2})=\lim_{t\rightarrow +\infty}\frac{dist(c(t),c'(t))}{t}.
\end{eqnarray*}
\end{lemma}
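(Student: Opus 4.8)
The plan is to prove the identity by establishing the two inequalities separately, after first reformulating the right-hand side. Write $L:=\lim_{t\to\infty} dist(c(t),c'(t))/t$. The first step is to see this limit exists: by convexity of the metric, $t\mapsto dist(c(t),c'(t))$ is convex and vanishes at $0$, so $dist(c(t),c'(t))/t$ is non-decreasing and bounded above by $2$; hence $L=\sup_{t>0} dist(c(t),c'(t))/t$. Next, since the comparison triangle of $\Delta(p,c(t),c'(t))$ is isosceles with both legs of length $t$, if $\theta_t$ denotes its angle at the vertex corresponding to $p$, then $dist(c(t),c'(t))=2t\sin(\theta_t/2)$; consequently $\theta_t$ is non-decreasing, and with $\theta_\infty:=\sup_t\theta_t=\lim_{t\to\infty}\theta_t$ one gets $L=2\sin(\theta_\infty/2)$. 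Since $\angle(x,y)$ and $\theta_\infty$ both lie in $[0,\pi]$, where $2\sin(\cdot/2)$ is injective, it suffices to prove $\angle(x,y)=\theta_\infty$.

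For the inequality $\angle(x,y)\le\theta_\infty$ I would fix an arbitrary $q\in M$ and let $\sigma,\sigma'$ be the rays from $q$ representing $x,y$. Since $\sigma$ is asymptotic to $c$ and $\sigma'$ to $c'$, the functions $s\mapsto dist(\sigma(s),c(s))$ and $s\mapsto dist(\sigma'(s),c'(s))$ are convex and bounded, hence non-increasing, and so never exceed $dist(p,q)$; this forces $\lim_{s\to\infty}dist(\sigma(s),\sigma'(s))/s=L$ as well. On the other hand, the isosceles comparison triangle of $\Delta(q,\sigma(s),\sigma'(s))$ gives $\overline{\angle}_q(\sigma(s),\sigma'(s))=2\arcsin\!\big(\tfrac{dist(\sigma(s),\sigma'(s))}{2s}\big)$, which by monotonicity of comparison angles is non-decreasing in $s$ with infimum (as $s\to 0^+$) the Alexandrov angle $\angle_q(x,y)$. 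Hence $2\sin(\angle_q(x,y)/2)=\inf_{s>0}dist(\sigma(s),\sigma'(s))/s\le L$; taking the supremum over $q$ gives $2\sin(\angle(x,y)/2)\le L=2\sin(\theta_\infty/2)$, i.e. $\angle(x,y)\le\theta_\infty$.

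For the reverse inequality $\angle(x,y)\ge\theta_\infty$, the idea is to exhibit, for each $t$ with $\theta_t>0$, a point at which the angle between $x$ and $y$ is at least $\theta_t$, namely the midpoint $m=m_t$ of the segment $[c(t),c'(t)]$. Since $m$ lies in the interior of this geodesic, $\angle_m(c(t),c'(t))=\pi$ and $dist(m,c(t))=dist(m,c'(t))=t\sin(\theta_t/2)$. The base angle of the comparison triangle of $\Delta(p,c(t),c'(t))$ is $(\pi-\theta_t)/2$, so $\angle_{c(t)}(p,c'(t))\le(\pi-\theta_t)/2$; combining this with $\angle_{c(t)}(x,p)=\pi$ and the triangle inequality for angles at $c(t)$ yields $\angle_{c(t)}(x,c'(t))\ge(\pi+\theta_t)/2$. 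As $[c(t),m]\subset[c(t),c'(t)]$ and $[c(t),c(s)]\subset c$ for $s>t$, this reads $\angle_{c(t)}(m,c(s))\ge(\pi+\theta_t)/2$, and the CAT(0) inequality turns a lower bound on this angle into a lower bound on the opposite side, $dist(m,c(s))^2\ge dist(m,c(t))^2+(s-t)^2+2(s-t)\,dist(m,c(t))\sin(\theta_t/2)$. Feeding this into $\overline{\angle}_m(c(s),c(t))$ and letting $s\to\infty$ gives $\limsup_{s\to\infty}\overline{\angle}_m(c(s),c(t))\le(\pi-\theta_t)/2$; since the rays $[m,c(s)]$ converge to the ray from $m$ representing $x$ and the Alexandrov angle is lower semicontinuous, $\angle_m(x,c(t))\le(\pi-\theta_t)/2$, and symmetrically $\angle_m(y,c'(t))\le(\pi-\theta_t)/2$. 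Finally the triangle inequality for angles at $m$ gives $\pi=\angle_m(c(t),c'(t))\le\angle_m(c(t),x)+\angle_m(x,y)+\angle_m(y,c'(t))$, so $\angle_m(x,y)\ge\theta_t$; taking the supremum over $t$ yields $\angle(x,y)\ge\theta_\infty$.

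I expect this last inequality to be the main obstacle. The midpoints $m_t$ are the natural candidates (in the hyperbolic plane they converge to the geodesic joining $x$ and $y$), but when $\theta_\infty<\pi$ they run off to infinity, and since $M$ is not assumed proper one cannot extract convergent subsequences in $\overline{M}$; the estimates must therefore be quantitative and uniform in $t$, the decisive input being the CAT(0) lower bound on $dist(m_t,c(s))$ coming from the angle estimate $\angle_{c(t)}(x,c'(t))\ge(\pi+\theta_t)/2$. Everything else is bookkeeping with the comparison inequality, the isosceles comparison triangles, and the triangle inequality for Alexandrov angles.
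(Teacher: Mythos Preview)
The paper does not actually prove this lemma: its entire proof is a citation to Proposition~II.9.8 of Bridson--Haefliger. Your proposal therefore supplies content the paper omits, and the argument you outline is essentially correct and is in fact close in spirit to the Bridson--Haefliger proof: reduce to showing $\angle(x,y)=\theta_\infty$ via the isosceles comparison triangles, get $\angle(x,y)\le\theta_\infty$ easily from monotonicity of comparison angles at an arbitrary basepoint, and for the reverse inequality produce for each $t$ a point (your midpoint $m_t$) where the angle between $x$ and $y$ is at least $\theta_t$.

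One phrasing issue worth fixing: you justify the key step $\angle_m(x,c(t))\le(\pi-\theta_t)/2$ by saying ``the Alexandrov angle is lower semicontinuous.'' In general metric spaces the Alexandrov angle is only \emph{upper} semicontinuous, which would go the wrong way. What actually makes this step work in CAT(0) is that the geodesics $[m,c(s)]$ converge pointwise to the ray $\rho=[m,x)$, so for fixed $r>0$ one has $\overline{\angle}_m(\rho(r),c(t))=\lim_s\overline{\angle}_m(\gamma_s(r),c(t))\le\limsup_s\overline{\angle}_m(c(s),c(t))\le(\pi-\theta_t)/2$ by monotonicity of comparison angles along $\gamma_s$; then $\angle_m(x,c(t))\le\overline{\angle}_m(\rho(r),c(t))$ finishes it. (Equivalently, the triangle inequality for angles together with $\angle_m(\gamma_s,\rho)\to 0$ shows the angle at $m$ is actually continuous here.) With that adjustment your argument is complete.
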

\begin{proof}
See proposition 9.8 on page 281 in chapter II.9 of \cite{BH}.
\end{proof}

\subsection{Product} Let $X_{1}$ and $X_{2}$ be two metric spaces. The product $X=X_{1}\times X_{2}$ has a natural metric which is called the product metric. Let $\gamma_{i}$ be an isometry of $X_{i}$ $(i=1,2)$. It is obvious that $\gamma=(\gamma_{1},\gamma_{2})$ is an isometry of $X$ under the natural action. The following lemma tells when the converse is true.
\begin{lemma}\label{product}
Let $X=X_{1}\times X_{2}$. Then an isometry $\gamma$ on $X$ decomposes as $(\gamma_{1},\gamma_{2})$, with $\gamma_{i}$ be an isometry of $X_{i}$ $(i=1,2)$, if and only if, for every $x_{1}\in X_{1}$, there exists a point denoted $\gamma_{1}\cdot x_{1} \in X_{1}$ such that $\gamma \cdot (\{x_{1}\}\times X_{2})=\{\gamma_{1}\cdot x_{1}\}\times X_{2}$.  
\end{lemma}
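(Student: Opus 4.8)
The plan is to observe that one implication is immediate and to concentrate on the converse. If $\gamma=(\gamma_1,\gamma_2)$ then $\gamma\cdot(\{x_1\}\times X_2)=\{\gamma_1\cdot x_1\}\times X_2$, so the ``only if'' direction needs no work. For the ``if'' direction, assume that for each $x_1\in X_1$ the vertical slice $\{x_1\}\times X_2$ is carried by $\gamma$ onto a vertical slice $\{\gamma_1\cdot x_1\}\times X_2$. First I would check that the resulting assignment $x_1\mapsto\gamma_1\cdot x_1$ is a well-defined bijection of $X_1$: it is well-defined because a vertical slice determines its $X_1$-coordinate; it is injective because $\gamma$ is injective and distinct vertical slices are disjoint; and it is surjective because $\gamma$ is onto and the vertical slices exhaust $X$. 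Consequently $\gamma$ has the form $\gamma(x_1,x_2)=(\gamma_1\cdot x_1,\ \gamma_{x_1}(x_2))$ for certain maps $\gamma_{x_1}\colon X_2\to X_2$, and since $\gamma$ restricts to a bijection of $\{x_1\}\times X_2$ onto $\{\gamma_1\cdot x_1\}\times X_2$ while the product metric restricted to a vertical slice is just $d_2$, each $\gamma_{x_1}$ is an isometry of $X_2$.

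It then remains to see that $\gamma_1$ is itself an isometry of $X_1$ and that $\gamma_{x_1}$ does not depend on $x_1$, and I would obtain both at once. The key observation is that $\gamma^{-1}$ also permutes the vertical slices, inducing the map $\gamma_1^{-1}$ on $X_1$. Now for any $x_1,x_1'\in X_1$ and any $x_2\in X_2$, comparing the two sides of $d\big((x_1,x_2),(x_1',x_2)\big)=d\big(\gamma(x_1,x_2),\gamma(x_1',x_2)\big)$ and using the Pythagorean form of the product metric gives $d_1(x_1,x_1')^2=d_1(\gamma_1\cdot x_1,\gamma_1\cdot x_1')^2+d_2\big(\gamma_{x_1}(x_2),\gamma_{x_1'}(x_2)\big)^2$. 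This already shows $\gamma_1$ is $1$-Lipschitz; applying the same inequality to $\gamma^{-1}$ shows $\gamma_1^{-1}$ is $1$-Lipschitz, so $\gamma_1$ is an isometry. Feeding this back into the identity above forces $d_2\big(\gamma_{x_1}(x_2),\gamma_{x_1'}(x_2)\big)=0$ for every $x_2$, hence $\gamma_{x_1}=\gamma_{x_1'}$; writing $\gamma_2$ for this common isometry of $X_2$, we conclude $\gamma=(\gamma_1,\gamma_2)$.

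The one step that needs a little care is this last one. The natural instinct is to test $d(\gamma p,\gamma q)=d(p,q)$ on points $p,q$ differing in both coordinates, but that yields an underdetermined relation between $\gamma_1$ and the family $\{\gamma_{x_1}\}$; the clean route is to restrict to points in a common vertical slice and to play $\gamma$ off against $\gamma^{-1}$, after which the form of the product metric does the rest. I note that the argument uses only that $\gamma$ is a bijective isometry, so no CAT(0) or geodesic hypothesis on $X_1$ and $X_2$ is actually needed, and the same proof goes through for any $\ell^p$ product metric.
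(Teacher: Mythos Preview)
Your argument is correct. The paper does not actually prove this lemma; it simply refers to Proposition~I.5.3 in Bridson--Haefliger, and your proof is essentially the self-contained version of that standard argument: show that $\gamma_1$ is a bijection, write $\gamma(x_1,x_2)=(\gamma_1 x_1,\gamma_{x_1}(x_2))$, use the Pythagorean form of the product metric on a pair of points with the same $X_2$-coordinate to get $\gamma_1$ $1$-Lipschitz, invoke $\gamma^{-1}$ for the reverse inequality, and then read off $\gamma_{x_1}=\gamma_{x_1'}$.

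One small slip worth fixing: in your last paragraph you say the clean route is ``to restrict to points in a common vertical slice,'' but your actual computation uses $(x_1,x_2)$ and $(x_1',x_2)$, which lie in a common \emph{horizontal} slice $X_1\times\{x_2\}$ rather than a vertical slice $\{x_1\}\times X_2$. The mathematics is unaffected; only the wording needs adjusting.
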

\begin{proof}
See proposition 5.3 on page 56 in chapter I.5 of \cite{BH}.
\end{proof}

The following product decomposition theorem will be applied for several times in this article.
\begin{proposition}\label{pdt}
Assume that $M$ is a complete CAT(0) space. Let $c:\mathbb{R} \to M$ be a geodesic line and $P_{c}$  be the set of geodesic lines which are parallel to $c$. Then, $P_{c}$ is isometric to the product $P'_{c}\times \mathbb{R}$ where $P'_{c}$ is a closed convex subset in $M$.
\end{proposition}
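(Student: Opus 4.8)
The plan is to reduce everything to the \emph{Flat Strip Theorem} (see \cite{BH}): in a complete CAT(0) space, two geodesic lines at finite Hausdorff distance bound a flat strip, i.e.\ the convex hull of their images is isometric to $\mathbb{R}\times[0,a]$ with the two lines as its boundary lines. First I would record the consequences I need. For a geodesic line $c$ and any parallel line $c'$, the function $t\mapsto dist(c(t),c'(\mathbb{R}))$ is bounded and convex, hence constant, so for geodesic lines ``parallel'' (finite Hausdorff distance) coincides with ``bi-asymptotic''. Let $b_{+},b_{-}$ be the Busemann functions of the ends $c(+\infty),c(-\infty)$, normalized by $b_{\pm}(c(0))=0$; they are $1$-Lipschitz and convex on $M$, and $b_{+}+b_{-}\ge 0$ everywhere (triangle inequality), with $P_{c}=\{\,b_{+}+b_{-}=0\,\}$ (a standard consequence of the flat strip theorem; see \cite{BH}). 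Since $b_{+}+b_{-}$ is convex and continuous, this exhibits $P_{c}$ as a closed convex subset of $M$; thus $P_{c}$ with the induced metric is itself a complete CAT(0) space, and it suffices to find the product structure inside $P_{c}$.

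Next I would build the splitting. Fix $c$. Through each $x\in P_{c}$ there is a unique geodesic line parallel to $c$ (two such would be parallel to each other and share a point, impossible in a flat strip). On $P_{c}$ we have $b_{-}=-b_{+}$, so $b_{-}$ is both convex and concave along geodesics of $P_{c}$, hence affine there; moreover, inspecting the flat strip spanned by $c$ and the line through $x$ shows $b_{-}$ is unit-speed monotone along that line, so there is a unique parametrization $c_{x}$ of it with $b_{-}\circ c_{x}=\mathrm{id}_{\mathbb{R}}$ (equivalently, $c_{x}(0)$ is the point of the line closest to $c(0)$). Put $P'_{c}:=\{\,x\in P_{c} : b_{-}(x)=0\,\}=\{\,c_{x}(0):x\in P_{c}\,\}$; it is closed, and convex because it is a level set of the affine function $b_{-}|_{P_{c}}$ inside the convex set $P_{c}$, hence a closed convex subset of $M$. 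Define $\Phi\colon P'_{c}\times\mathbb{R}\to P_{c}$, $\Phi(x,t)=c_{x}(t)$. Uniqueness of the parallel line through a point together with $b_{-}\circ c_{x}=\mathrm{id}$ makes $\Phi$ a bijection at once; the real content is that $\Phi$ is distance-preserving.

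To see this, fix $x,y\in P'_{c}$ and $s,t\in\mathbb{R}$. The lines $c_{x},c_{y}$ are parallel, so by the Flat Strip Theorem the convex hull $F\subseteq P_{c}$ of their images is a flat strip; identifying $F\cong\mathbb{R}\times[0,e]$ so that $c_{x}(r)\leftrightarrow(r,0)$, we may write $c_{y}(r)\leftrightarrow(\epsilon r+\beta,e)$ with $\epsilon\in\{\pm1\}$, $\beta\in\mathbb{R}$ and $e=dist(c_{x}(\mathbb{R}),c_{y}(\mathbb{R}))$. A direct Euclidean computation in $F$ gives $dist(c_{x}(t),c_{y}(s))^{2}=(t-\epsilon s-\beta)^{2}+e^{2}$, so the whole statement reduces to showing $\epsilon=1$ and $\beta=0$; this is the step I expect to be the crux. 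I would resolve it with $h:=b_{-}$: it is $1$-Lipschitz on $M$ and affine along geodesics of $P_{c}\supseteq F$, hence $h|_{F}$ is a Euclidean-affine, $1$-Lipschitz function on $\mathbb{R}\times[0,e]$; since $h(r,0)=b_{-}(c_{x}(r))=r$, affineness forces $h(u,v)=u+\gamma v$, and the $1$-Lipschitz bound forces $\sqrt{1+\gamma^{2}}\le1$, i.e.\ $\gamma=0$, so $h(u,v)=u$. Then $\epsilon r+\beta=h(c_{y}(r))=b_{-}(c_{y}(r))=r$ for all $r$, whence $\epsilon=1$ and $\beta=0$. Substituting back, $dist(\Phi(x,t),\Phi(y,s))^{2}=(t-s)^{2}+e^{2}$, and taking $t=s=0$ identifies $e=dist(x,y)$; therefore $\Phi$ is an isometry from $P'_{c}\times\mathbb{R}$ (with the product metric) onto $P_{c}$, with $P'_{c}$ closed and convex in $M$, which is exactly the claim.
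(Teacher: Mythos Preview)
Your argument is correct. Note, however, that the paper does not actually prove this proposition: it simply refers the reader to Theorem~II.2.14 on page~183 of \cite{BH}. What you have written is essentially the proof given there, with one cosmetic difference: Bridson--Haefliger use the nearest-point projection $\pi\colon P_{c}\to c(\mathbb{R})$ to define the $\mathbb{R}$-coordinate and set $P'_{c}=\pi^{-1}(c(0))$, whereas you use the Busemann function $b_{-}$ and set $P'_{c}=\{b_{-}=0\}\cap P_{c}$. These agree on $P_{c}$, since in the flat strip between $c$ and any parallel line the level sets of $b_{-}$ are exactly the orthogonal cross-sections, so $b_{-}(p)$ equals the parameter of the foot of $p$ on $c$. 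Your use of the $1$-Lipschitz bound on $b_{-}$ to force $\gamma=0$ is a clean way to align the parametrizations; the same step in \cite{BH} is phrased via orthogonality in the flat strip.
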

\begin{proof}
See theorem 2.14 on page 183 in chapter II.2 of \cite{BH}.
\end{proof}

\subsection{Semi-uniformly visible CAT(0) spaces}
 
Recall a metric space $M$ is called \textsl{Gromov-hyperbolic} if there exists a $\delta>0$ such that every geodesic triangle is $\delta$-thin. Where a \textsl{$\delta$-thin geodesic triangle} means that each of its sides is contained in the $\delta$-neighborhood of the union of the other two sides. A $\mathbb{R}$-tree is a Gromov-hyperbolic space which holds for any $\delta>0$. For more details one can see \cite{BH, Gromov87}. The following proposition tells that Gromov-hyperbolic CAT(0) spaces are stronger than visible CAT(0) spaces.
\begin{proposition}\label{gmhiv}
Every complete Gromov-hyperbolic CAT(0) space is a visible CAT(0) space.
\end{proposition}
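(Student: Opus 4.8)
The plan is to argue by contradiction: suppose $M$ is a complete Gromov-hyperbolic CAT(0) space which is not visible, so there exist distinct points $x, y \in M(\infty)$ that are not joined by any bi-infinite geodesic line. Fix a basepoint $p \in M$, and let $c, c' \colon [0,\infty) \to M$ be the (unique) geodesic rays from $p$ representing $x$ and $y$ respectively. The key quantitative input is Lemma \ref{pro:angular}, which tells us that $\operatorname{dist}(c(t),c'(t))/t \to 2\sin(\angle(x,y)/2)$; I want to show this limit is $0$, i.e. $\angle(x,y) = 0$, which forces $x = y$ — the desired contradiction. (An alternative but equivalent route is to show directly that $\angle(x,y) = \pi$ implies visibility, using the standard fact that any two rays from $p$ making angle $\pi$ fit together into a geodesic line; I'll take the non-visibility hypothesis and derive a contradiction through the thin-triangle condition.)

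The heart of the argument is this: since $\angle_p(x,y)$ need not be $\pi$ a priori, I first observe that for any two points $c(s)$ and $c'(t)$ far out along the two rays, the geodesic triangle with vertices $p$, $c(s)$, $c'(t)$ is $\delta$-thin. I would like to conclude that the concatenated path going out along $c$ to $c(s)$, then along $[c(s), c'(t)]$, then back along $c'$ to $p$... is not the right object; instead, the cleaner approach is: take the geodesic segments $\sigma_n = [c(n), c'(n)]$ for $n \to \infty$ and apply the standard limiting/Arzelà–Ascoli argument (using completeness of $M$ and convexity of the metric) to extract a geodesic line $\ell$ joining $x$ to $y$ — \emph{provided} the segments $\sigma_n$ stay within bounded distance of $p$. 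So the real content is: show that in a $\delta$-hyperbolic CAT(0) space, $\operatorname{dist}(p, \sigma_n)$ stays bounded as $n \to \infty$. This follows from $\delta$-thinness of the triangle $\triangle(p, c(n), c'(n))$ together with the following: if $\operatorname{dist}(p,\sigma_n) \to \infty$, then the sides $c|_{[0,n]}$ and $c'|_{[0,n]}$ must $\delta$-fellow-travel for a long initial segment; but then $\operatorname{dist}(c(t),c'(t)) \le 2\delta$ for $t$ up to roughly $\operatorname{dist}(p,\sigma_n) \to \infty$, which via Lemma \ref{pro:angular} gives $2\sin(\angle(x,y)/2) = \lim \operatorname{dist}(c(t),c'(t))/t = 0$, so $x = y$, contradiction. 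Hence $\operatorname{dist}(p,\sigma_n)$ is bounded, the limiting geodesic line $\ell$ exists and joins $x$ to $y$, contradicting non-visibility.

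More carefully, on the bounded-distance step: by $\delta$-thinness of $\triangle(p,c(n),c'(n))$, each point of $[c(n),c'(n)]$ is within $\delta$ of $c|_{[0,n]} \cup c'|_{[0,n]}$, and each point of $c|_{[0,n]}$ is within $\delta$ of $c'|_{[0,n]} \cup [c(n),c'(n)]$. Let $r_n = \operatorname{dist}(p, \sigma_n)$. For a point $c(t)$ with $t$ somewhat less than $r_n$, the point $c(t)$ is at distance $> \delta$ from $\sigma_n$ once $t$ is small enough, hence $c(t)$ is within $\delta$ of $c'|_{[0,n]}$, giving $\operatorname{dist}(c(t), c'(s)) \le \delta$ for some $s$; a comparison/convexity estimate in the CAT(0) triangle $\triangle(p, c(t), c'(s))$ then bounds $|s - t|$ and yields $\operatorname{dist}(c(t),c'(t)) \le 2\delta$ for all such $t$. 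Letting $n \to \infty$, if $r_n \to \infty$ we'd get $\operatorname{dist}(c(t),c'(t)) \le 2\delta$ for all $t \ge 0$, whence the limit in Lemma \ref{pro:angular} is $0$ and $x = y$. So $r_n$ is bounded; passing to a subsequence, $\sigma_n$ (suitably reparametrized so the nearest-point projections of $p$ land at parameter $0$) converges uniformly on compact sets to a geodesic line $\ell$ with $\ell(-\infty) = x$, $\ell(+\infty) = y$.

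The step I expect to be the main obstacle is the bounded-distance claim $\sup_n \operatorname{dist}(p, \sigma_n) < \infty$, and in particular making the fellow-traveling estimate quantitative enough to feed into Lemma \ref{pro:angular} — one has to be a little careful that the two rays fellow-travel on an interval whose length genuinely grows like $r_n$, not merely that they come close at a single parameter. Everything else (the Arzelà–Ascoli extraction of $\ell$, uniqueness of rays from $p$, the identification of the endpoints of $\ell$ with $x$ and $y$) is standard CAT(0) technology that can be cited from \cite{BH}.
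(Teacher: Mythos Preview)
The paper does not supply its own proof of this proposition; it simply cites Buyalo \cite{BU} for the general statement and Bridson--Haefliger \cite{BH} for the proper case. So the relevant question is whether your argument stands on its own.

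Your thin-triangle dichotomy is sound: the estimate $dist(c(t),c'(t))\le 2\delta$ for $t < r_n - \delta$ is correct, and via Lemma~\ref{pro:angular} it does force $x=y$ whenever $r_n = dist(p,\sigma_n)$ is unbounded. The gap is in the other branch. You write that once $\sup_n r_n < \infty$, ``the standard limiting/Arzel\`a--Ascoli argument (using completeness of $M$ and convexity of the metric)'' extracts a geodesic line. But Arzel\`a--Ascoli needs local compactness of the target, and the proposition as stated --- and as used later in the paper, cf.\ Remark~\ref{suv-1} --- is meant to cover complete CAT(0) spaces that are \emph{not} proper. In a non-proper CAT(0) space a bounded sequence of points need not subconverge, so neither do the nearest-point projections of $p$ onto your segments $\sigma_n$, and the limit geodesic $\ell$ may simply fail to exist. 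The same obstruction undermines your alternative route: the implication ``$\angle(x,y)=\pi \Rightarrow$ there is a geodesic line from $x$ to $y$'' (Proposition~II.9.21 of \cite{BH}) is stated and proved there only under a properness hypothesis. What you have written is essentially the argument for Proposition~III.H.1.4 in \cite{BH}, which is precisely what the paper cites for the proper case; handling the non-proper case requires a different mechanism for producing the bi-infinite geodesic (this is what the citation to \cite{BU} is covering), and that is the missing idea in your proposal.
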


\begin{proof}
See proposition 10.1 in \cite{BU}. For the proper case, one can also see proposition 1.4 in chapter III.H of \cite{BH}. 
\end{proof}

Before we define semi-uniformly visible CAT(0) spaces, let us consider the following two examples.

\begin{example}\label{exmp1}
For every positive integer $n$, let $I_{n}$ be the vertical segment in $\mathbb{R}^2$ as follows
\[I_{n}:=\{(x,y)\in \mathbb{R}^2; \ \ x=\frac{1}{n}, 0\leq y \leq n\}\]
and $I$ be the horizontal segment with unit length as follows
\[I:=\{(x,y)\in \mathbb{R}^2; \ \ 0\leq x \leq 1, y\equiv0\}.\]
Consider the space 
\[M:=((\bigcup_{n=1}^{+\infty} I_{n})\bigcup I,ds^2)\]
where $ds^2$ is the induced metric from $\mathbb{R}^2$.

It is easy to see that $M$ is a complete tree. So $M$ is a complete Gromov-hyperbolic space. By proposition \ref{gmhiv}, $M$ is also a complete CAT(0) visible space. It is not hard to see that $M$ is not locally compact (around the point $(x,y)=(0,0))$, unbounded, and the visual boundary of $M$ is empty.    

\end{example}

Let $M$ be a complete CAT(0) space. We call $M$ has \textsl{an infinite-flat-strip} if there exists a totally geodesical convex subset $U\times \mathbb{R} \subseteq M$ where $U$ is unbounded. From proposition \ref{pdt} we can always assume that $U$ is closed convex. The following example tells that the convex hull of an infinite-flat-strip may not contain a half flat plane.

\begin{example}\label{exmp2}
Let $M$ be the metric space in example \ref{exmp1}. Consider the product space

\[N:=M\times \mathbb{R}\]
which is endowed with the product metric.

It is obvious that $N$ is a complete CAT(0) space, but not locally compact. Actually the visual boundary $N(\infty)$ of $N$ consists of two points, which are the positive and negative directions of the $\mathbb{R}$ component. In particular they can be joined by a geodesic line in $N$. Thus $N$ is a complete visible CAT(0) space. It is easy to see that $N$ contains an infinite-flat-strip. And $N$ is not a Gromov-hyperbolic CAT(0) space.

\end{example}
Example \ref{exmp2} tells us that a complete visible CAT(0) space may contain an infinite-flat-strip. The following definition will exclude complete CAT(0) spaces with infinite-flat-strip.

%The definition of a semi-uniformly visible CAT(0) space will exclude example \ref{exmp2} and contain example \ref{exmp1}.
\begin{definition}\label{dosuvs}
Let $M$ be a complete CAT(0) space. $M$ is called \textsl{semi-uniformly visible} if $M$ is visible and $M$ can not contain an infinite-flat-strip. 
\end{definition}

It is obvious that any complete simply connected Riemannian manifold whose sectional curvatures are bounded by a negative number is a semi-uniformly visible CAT(0) space. The following proposition tells that semi-uniformly visible CAT(0) spaces contain a lot of standard visible spaces.
\begin{proposition}\label{sosuv}
(1): Every complete proper visible CAT(0) space is a semi-uniformly visible CAT(0) space.\\ 
(2): Every complete Gromov-hyperbolic CAT(0) space is a semi-uniformly visible CAT(0) space.
\end{proposition}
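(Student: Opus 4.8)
The plan is to treat both statements by the same strategy: in each case the space is already known to be visible --- by hypothesis for (1), and by Proposition~\ref{gmhiv} for (2) --- so the only thing left to verify is that it cannot contain an infinite-flat-strip. Thus I would argue by contradiction, assuming that $M$ contains an isometrically embedded convex subset $U\times\mathbb{R}\subseteq M$ with $U$ unbounded; by Proposition~\ref{pdt} one may take $U$ to be closed and convex.

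For part (1), the first step is to extract a geodesic ray lying inside $U$. I would fix $u_{0}\in U$, choose $v_{n}\in U$ with $dist(u_{0},v_{n})\to +\infty$, note that the unit-speed geodesic segments $[u_{0},v_{n}]$ lie in $U$ by convexity, and invoke properness (Arzel\`a--Ascoli) to pass to a subsequence converging uniformly on compact sets to a geodesic ray $\rho\colon[0,+\infty)\to M$; since $U$ is closed, $\rho([0,+\infty))\subseteq U$. Then $H:=\rho([0,+\infty))\times\mathbb{R}$ is an isometrically embedded convex flat half-plane in $M$. The second step is to exhibit two distinct boundary points subtending an angle smaller than $\pi$: from a common basepoint in $H$, the unit-speed rays $c(t)=(t/\sqrt 2,\,t/\sqrt 2)$ and $c'(t)=(t/\sqrt 2,\,-t/\sqrt 2)$ are genuine geodesic rays of $M$, their endpoints $x\neq y\in M(\infty)$ are distinct because $dist(c(t),c'(t))=\sqrt 2\,t\to+\infty$, and Lemma~\ref{pro:angular} gives $2\sin(\angle(x,y)/2)=\sqrt 2$, hence $\angle(x,y)=\pi/2$. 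This contradicts the fact recorded in Section~\ref{np} that any two distinct points of $M(\infty)$ have angular distance $\pi$ when $M$ is visible.

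For part (2), no ray needs to be produced, since an infinite-flat-strip already contains Euclidean flat strips of arbitrarily large width. Letting $\delta>0$ be a hyperbolicity constant for $M$ and using that $U$ is unbounded, I would pick $u_{0},u_{1}\in U$ with $L:=dist(u_{0},u_{1})>\sqrt 2\,\delta$, and then consider, inside the isometrically embedded flat strip $[u_{0},u_{1}]\times\mathbb{R}\cong[0,L]\times\mathbb{R}$, the geodesic triangle $T$ with vertices $A=(u_{0},-L)$, $B=(u_{0},L)$, $C=(u_{1},0)$; its three sides are geodesic segments of the strip, hence of $M$. A short Euclidean computation shows that the midpoint $D=(u_{0},0)$ of the side $[A,B]$ satisfies $dist(D,[A,C]\cup[B,C])=L/\sqrt 2>\delta$, so $T$ fails to be $\delta$-thin, contradicting Gromov-hyperbolicity.

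I expect the only genuinely nonroutine point to be the ray-extraction in part (1): properness is used there in an essential way, since an unbounded closed convex subset of a non-proper complete CAT(0) space need not contain a geodesic ray --- witness the tree $M$ of Example~\ref{exmp1} --- which is precisely why the visible space $M\times\mathbb{R}$ of Example~\ref{exmp2} does carry an infinite-flat-strip. Beyond that, both parts come down to plane trigonometry together with the description of visibility via the angular metric.
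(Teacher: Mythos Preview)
Your proof is correct and follows essentially the same route as the paper's: in both parts you argue by contradiction from an infinite-flat-strip, use Arzel\`a--Ascoli in (1) to extract a ray and hence a flat half-plane, and in (2) exhibit a Euclidean triangle in a wide strip violating $\delta$-thinness. The only cosmetic differences are that you work directly with $U$ (after invoking Proposition~\ref{pdt} to make it closed convex) rather than passing to the parallel set $P'_c$, and you spell out explicitly---via the angular-metric computation and the triangle $A,B,C$---the two steps the paper leaves as ``impossible because $M$ is visible'' and ``not hard to find''.
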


\begin{proof}
\textit{Proof of part (1):} If not. Then we can assume that $M$ is a complete proper visible CAT(0) space which contains an infinite-flat-strip $U\times \mathbb{R}$ where $U$ is unbounded. Let $c:(-\infty,+\infty)\to M$ be the geodesic line $x_{0}\times \mathbb{R}$ where $x_{0}\in U$. And let $P_{c}$ be the set of geodesic lines which are parallel to $c$. By proposition \ref{pdt}, $P_{c}$ is isometric to the product $P'_{c}\times \mathbb{R}$ where $P'_{c}$ is a closed convex subset in $M$. Hence $U\subseteq P'_{c}$. Since $U$ is unbounded, $P'_{c}$ is also unbounded. Thus $P'_{c}$ a complete, unbounded, proper space. 
The Arzel\`a Ascoli theorem would guarantee that there exists a geodesic ray $d:[0,+\infty)\to P'_{c}$. Hence $M$ contains a flat half plane $[0,+\infty)\times \mathbb{R}$ in $M$ which is impossible because $M$ is visible.
\newline
\\
\textit{Proof of part (2):} By proposition \ref{gmhiv} it suffices to show that every complete Gromov-hyperbolic CAT(0) space is semi-uniform. Assume not. Then we can assume that $M$ is a complete Gromov-hyperbolic CAT(0) space which contains an infinite-flat-strip $U\times \mathbb{R}$ where $U$ is unbounded. Let $\delta>0$ be the number such that every geodesic triangle in $M$ is $\delta$-thin. Let $c:(-\infty,+\infty)\to M$ be the geodesic line $x_{0}\times \mathbb{R}$ where $x_{0}\in U$. And let $P_{c}$ be the set of geodesic lines which are parallel to $c$. By proposition \ref{pdt}, $P_{c}$ is isometric to the product $P'_{c}\times \mathbb{R}$ where $P'_{c}$ is a closed convex subset in $M$. Hence $U\subseteq P'_{c}$.  Since $U$ is unbounded, we can find a flat strip $[0,k]\times \mathbb{R}$ with width $k$ where $k$ is an arbitrary positive number. If we choose $k$ to be greater enough than $\delta$, it is not hard to find a geodesic triangle $\Delta$ in $[0,k]\times \mathbb{R}$ such that $\Delta$ is not $\delta$-thin, which is a contradiction.   
    
\end{proof}

\begin{remark}\label{suv-1}
From part (1) of the proposition above, if the space $M$ is complete, locally compact, CAT(0) space, then semi-uniform visibility is equvilent to visibility. Hence, the interesting aspect for semi-uniform visibility is for complete CAT(0) spaces which are not locally compact. 
\end{remark}

\subsection{Horospheres in CAT(0) spaces} Let $M$ be a complete CAT(0) space and $x \in M(\infty)$. $c:[0,+\infty)\to M$ is a geodesic ray with $c(+\infty)=x$, which determines a \textsl{Busemann function} $f=f_{c}$ at $x$ given by $f(p)=\lim_{t\to +\infty}(dist(p, c(t))-t)$. $f$ is a convex function. For each $t\in \mathbb{R}$, the set $B(t):=\{q \in M, f(q)\leq t\}$ is a \textsl{horoball} at $x$, whose boundary,  a level set of $f$, is a \textsl{horosphere} at $x$. Every horoball is convex because of the convexity of $f$.  If $x$ is fixed by some parabolic isometry $\gamma$ of $M$, then each horoball at $x$ is $\gamma$-invariant. In particular each horosphere at $x$, the boundary of some horoball, is also $\gamma$-invariant. One can see more details in \cite{BH}. The following lemma will be applied to prove theorem \ref{mt-2}.

\begin{proposition}\label{hsie}
Let $M$ be a complete, simply connected, nonpositively curved Riemannian manifold and $c:[0,+\infty)\to M$ be a geodesic ray. Then every horosphere at $c(+\infty)$ is diffeomorphic to $\mathbb{R}^{k}$ where $k=dim(M)-1$.
\end{proposition}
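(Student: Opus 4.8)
The plan is to argue through the Busemann function $f=f_{c}$ attached to $c$, with ideal point $x:=c(+\infty)$. On a Hadamard manifold $M$ the function $f$ is convex, of class $C^{2}$ (at worst $C^{1,1}$, which is all that is needed below), and satisfies $\|\nabla f\|\equiv 1$; hence $f$ has no critical point, every horosphere $S_{t}:=f^{-1}(t)$ at $x$ is a codimension-one $C^{2}$ submanifold without boundary, and the horoball $B:=f^{-1}((-\infty,0])$ is a closed convex set with boundary $S_{0}$. So it suffices to show that $S_{0}$ is diffeomorphic to $\mathbb{R}^{n-1}$, where $n=\dim M$.

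First I would establish the product decomposition $M\cong S_{0}\times\mathbb{R}$. The integral curves of $\nabla f$ are precisely the unit-speed geodesics of $M$ asymptotic to $x$, traversed away from $x$; since $M$ is a Hadamard manifold every geodesic extends to a geodesic line on all of $\mathbb{R}$, so the flow $\varphi_{t}$ of $\nabla f$ is defined on all of $M\times\mathbb{R}$. As $f(\varphi_{t}(p))=f(p)+t$, the map $S_{0}\times\mathbb{R}\to M$, $(q,t)\mapsto\varphi_{t}(q)$, is a diffeomorphism with inverse $p\mapsto(\varphi_{-f(p)}(p),f(p))$, and it carries $S_{0}\times\{t\}$ onto $S_{t}$. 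In particular all horospheres at $x$ are mutually diffeomorphic and $M\cong S_{0}\times\mathbb{R}$. By the Cartan--Hadamard theorem $M$ is diffeomorphic to $\mathbb{R}^{n}$, so $S_{0}\times\mathbb{R}\cong\mathbb{R}^{n}$; thus $S_{0}$ is connected and contractible. This part of the argument is routine once the standard regularity properties of Busemann functions are in hand.

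The remaining step, which I expect to be the only genuinely delicate point, is to upgrade ``contractible'' to ``diffeomorphic to $\mathbb{R}^{n-1}$'': this cannot be done by homotopy-theoretic means alone, since a contractible open manifold $W$ with $W\times\mathbb{R}\cong\mathbb{R}^{n}$ need not even be homeomorphic to $\mathbb{R}^{n-1}$ (Whitehead-manifold phenomena), so one must use the convexity of $B$. Fix a point $o$ in the interior $\mathrm{int}(B)$. In geodesic polar coordinates at $o$ (a diffeomorphism $(0,\infty)\times\{v\in T_{o}M:|v|=1\}\to M\setminus\{o\}$, valid because $M$ is Hadamard), $S_{0}$ corresponds to $\{(t,v):f(\exp_{o}(tv))=0\}$. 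For each unit $v\in T_{o}M$ the convex function $t\mapsto f(\exp_{o}(tv))$ is either nonincreasing, in which case the ray $\exp_{o}(\mathbb{R}_{\ge 0}v)$ stays inside $B$, or it eventually increases to $+\infty$, in which case it meets $S_{0}$ exactly once, at parameter $r(v)$, and with positive derivative there. By the implicit function theorem $r$ is $C^{2}$ on the (evidently open) set $\Omega$ of directions of the second type, so $S_{0}$ is $C^{2}$-diffeomorphic to $\Omega$, an open subset of the round sphere $S^{n-1}$. The complementary set of ``trapped'' directions is closed, is contained in the closed hemisphere $\{v:\langle\nabla f(o),v\rangle\le 0\}$, and is geodesically convex in $S^{n-1}$ (because the geodesic segment joining two points of $B$ lies in $B$). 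The complement in $S^{n-1}$ of a nonempty proper closed geodesically convex subset, when it is connected --- as it is here, being diffeomorphic to the connected set $S_{0}$ --- is diffeomorphic to $\mathbb{R}^{n-1}$, whence so is $S_{0}$. (That horospheres in Hadamard manifolds are diffeomorphic to Euclidean space is classical; see Heintze--Im Hof, \emph{Geometry of horospheres}, and Eberlein.) Thus the natural place for an error is precisely this last passage, which genuinely needs the convex geometry of the horoball and not merely the homotopy type of $S_{0}$.
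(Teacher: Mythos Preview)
Your route is genuinely different from the paper's and, as you yourself flag, the last passage is where it breaks down.

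The paper takes a point $q$ \emph{outside} the horoball $B$ and studies the distance function $g(p)=\mathrm{dist}(q,p)$ restricted to $H$. A point $p\in H$ is critical for $g$ exactly when the geodesic $[q,p]$ meets $H$ orthogonally, i.e.\ when its tangent at $p$ equals $\pm\nabla f(p)$. The $+$ sign would put $q$ on the ray from $p$ toward $x=c(+\infty)$, hence inside $B$, a contradiction; the $-$ sign says $p$ lies on the geodesic ray from $q$ to $x$, which crosses $H$ exactly once. Thus $g$ is proper, smooth, and has a single critical point (a minimum), and standard Morse theory (the paper cites Fukaya) gives $H\cong\mathbb{R}^{n-1}$. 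This bypasses entirely the question of what the ``shadow'' of $B$ looks like in a unit tangent sphere.

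Your argument projects from an \emph{interior} point $o$ and identifies $S_0$ with an open set $\Omega\subset S^{n-1}$; up to that point it is fine. The gap is twofold. First, the justification ``because the geodesic segment joining two points of $B$ lies in $B$'' does not establish that the trapped set $K$ is geodesically convex in $S^{n-1}$: convexity of $B$ concerns geodesics of $M$, whereas great-circle arcs in $T_oM$ are not carried to $M$-geodesics by $\exp_o$ once the curvature is nonzero. Second, even if $K$ were geodesically convex, the assertion that the complement of a nonempty proper closed geodesically convex subset of $S^{n-1}$, when connected, is diffeomorphic to $\mathbb{R}^{n-1}$ is false as stated: a great circle $S^1\subset S^3$ is closed, totally geodesic (hence geodesically convex), and contained in a closed hemisphere, yet $S^3\setminus S^1$ is connected and homotopy equivalent to $S^1$, not to $\mathbb{R}^3$. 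So neither the convexity of $K$ nor the structure of its complement is pinned down by what you wrote. You are right that the result is classical (Heintze--Im~Hof), but the particular path you chose does not close; the exterior-point Morse argument is both shorter and avoids these obstacles.
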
   

\begin{proof}
Let $H$ be a horosphere at $c(+\infty)$ and $B$ be the horoball whose boundary is $H$. Choose a point $q$ in the complement of $B$ and consider the distance function $f:H \to (0,+\infty)$, given by $f(p)=dist(q,p)$. The idea is prove that $f$ only has one critical point on $H$, which is the intersection point of $H$ and the geodesic ray, starting at $q$, which goes to $c(+\infty)$. Then the conclusion follows from standard Morse theory. One can see lemma 3-4 on page 511 in \cite{Fukaya84} for details.
\end{proof}

\subsection{Isometries on CAT(0) spaces}
Let $M$ be a complete CAT(0) space and $\gamma$ be an isometry of $M$. If the translation length $|\gamma|$ is achieved in $M$, $\gamma$ is either elliptic or hyperbolic. The following lemma tells us any isometry on $\mathbb{R}^n$ can achieve its translation length, which will be applied in subsequent sections.
\begin{lemma}\label{R1}
Let $\gamma$ be an isometry of the Euclidean space $\mathbb{R}^n$ where $n$ is a positive integer. Then $\gamma$ is either elliptic or hyperbolic.
\end{lemma}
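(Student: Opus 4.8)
The plan is to use the classical fact that every isometry of $\mathbb{R}^n$ is affine, and then reduce the statement to an elementary linear-algebra computation showing that the translation length is always realized.

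First I would recall that $\gamma$ has the form $\gamma(x)=Ax+b$ with $A\in O(n)$ and $b\in\mathbb{R}^n$; this follows from the standard argument that an isometry fixing the origin is linear (it is determined by its action on an orthonormal basis through the parallelogram law), composed with a translation. Then $dist(\gamma\cdot x,x)^2=|(A-I)x+b|^2$ is a non-negative convex quadratic function on $\mathbb{R}^n$; being bounded below, it attains its infimum, so $|\gamma|$ is achieved. More explicitly, I would decompose $\mathbb{R}^n=\ker(A-I)\oplus\ker(A-I)^{\perp}$. Using that $A$ is orthogonal one checks $\operatorname{im}(A-I)=\ker(A-I)^{\perp}$ (indeed $(A-I)^{*}=-A^{-1}(A-I)$ has the same kernel as $A-I$), so writing $b=b_{1}+b_{2}$ with $b_{1}\in\ker(A-I)$ and $b_{2}\in\operatorname{im}(A-I)$, there is a point $x_{0}$ with $(A-I)x_{0}=-b_{2}$. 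For every $x$ one then has $(A-I)x+b=(A-I)(x-x_{0})+b_{1}$ with the two summands orthogonal, whence $dist(\gamma\cdot x,x)^2=|(A-I)(x-x_{0})|^{2}+|b_{1}|^{2}\ge |b_{1}|^{2}$, with equality at $x=x_{0}$. Thus $|\gamma|=|b_{1}|$ is achieved.

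It then remains to split into two cases. If $b_{1}=0$ then $\gamma(x_{0})=x_{0}$, so $\gamma$ is elliptic. If $b_{1}\neq 0$, consider the line $\ell(t)=x_{0}+t\,b_{1}/|b_{1}|$; since $Ax_{0}=x_{0}-b_{2}$ and $Ab_{1}=b_{1}$ one computes $\gamma(\ell(t))=\ell\bigl(t+|b_{1}|\bigr)$, so $\gamma$ translates the geodesic line $\ell$ by $|b_{1}|>0$ and is therefore hyperbolic. One may alternatively simply invoke the dichotomy recorded in the introduction: once $|\gamma|$ is attained, $\gamma$ is elliptic if $|\gamma|=0$ and hyperbolic if $|\gamma|>0$.

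This argument presents no real obstacle; the only points requiring a little care are the identity $\operatorname{im}(A-I)=\ker(A-I)^{\perp}$, which uses the orthogonality of $A$ in an essential way, and — if one wants a fully self-contained treatment — the verification that a point realizing a positive translation length actually lies on a $\gamma$-axis, handled here directly by exhibiting the line $\ell$ rather than appealing to general CAT(0) machinery.
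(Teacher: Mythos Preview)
Your proof is correct and is essentially the classical linear-algebra route, i.e.\ the canonical-form theorem for rigid motions of $\mathbb{R}^n$. The paper, however, takes a genuinely different path: it argues by contradiction and induction on $n$, noting that a parabolic isometry would fix a point of $M(\infty)$ and hence preserve a horosphere, which in $\mathbb{R}^n$ is a totally geodesic copy of $\mathbb{R}^{n-1}$; by the inductive hypothesis the restriction of $\gamma$ to that hyperplane is elliptic or hyperbolic, and since the hyperplane sits totally geodesically in $\mathbb{R}^n$ a fixed point or an axis there is already a fixed point or axis in $\mathbb{R}^n$, contradicting parabolicity. Your argument is more elementary and self-contained---it does not need the existence of boundary fixed points for parabolic isometries, which the paper's sketch imports from the ambient CAT(0) theory---and it produces explicit data (the translation length $|b_1|$ and an explicit axis $\ell$). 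The paper's argument, on the other hand, stays within the geometric language of horospheres used elsewhere in the article and avoids any coordinate computation.
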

\begin{proof}
One possible way is to argue it by contradiction. Assume that $\gamma$ is parabolic. Then there exists a horosphere $H$ such that $\gamma$ acts on $H$ as an isometry. Since $H$ is a horosphere of $\mathbb{R}^n$, $H$ is isometric to $\mathbb{R}^{n-1}$ which is totally geodesic in $\mathbb{R}^n$. The conclusion follows by induction on the dimensions. For the details we leave it as an exercise to the reader.
\end{proof}

From definition an elliptic isometry $\gamma$ of $M$ has at least one fixed point. The following is one of the basic fixed point theorems in CAT(0) geometry.
\begin{lemma}\label{fpt}
An isometry $\gamma$ of a complete CAT(0) space $M$ has a fixed point if and only if there exists a $\gamma$-invariant bounded subset in $M$.
\end{lemma}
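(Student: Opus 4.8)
The plan is to prove the nontrivial implication via the Bruhat--Tits circumcenter construction. The forward direction is immediate: if $\gamma \cdot p = p$, then the singleton $\{p\}$ is a $\gamma$-invariant bounded subset of $M$.

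For the converse, suppose $Y \subseteq M$ is a nonempty bounded $\gamma$-invariant subset. Define $f : M \to [0,+\infty)$ by $f(x) := \sup_{y \in Y} dist(x,y)$, the radius of the smallest closed ball centered at $x$ containing $Y$ (finite since $Y$ is bounded), and set $r := \inf_{x \in M} f(x)$. The first and main step is to show that $f$ attains its infimum at a \emph{unique} point $c \in M$, the circumcenter of $Y$. Both existence and uniqueness follow from the CAT(0) inequality in ``midpoint'' form: for any $p, q \in M$ with midpoint $m$ and any $y \in M$,
\[
dist(y,m)^2 \le \tfrac12 dist(y,p)^2 + \tfrac12 dist(y,q)^2 - \tfrac14 dist(p,q)^2,
\]
which one gets by comparison with the Euclidean triangle on $y, p, q$, where it is the parallelogram law. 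Taking the supremum over $y \in Y$ yields $f(m)^2 \le \tfrac12 f(p)^2 + \tfrac12 f(q)^2 - \tfrac14 dist(p,q)^2$.

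Now I would apply this to a minimizing sequence $(x_n)$ with $f(x_n) \to r$. Writing $m_{n,k}$ for the midpoint of $x_n$ and $x_k$, we get $r^2 \le f(m_{n,k})^2 \le \tfrac12 f(x_n)^2 + \tfrac12 f(x_k)^2 - \tfrac14 dist(x_n,x_k)^2$, hence $dist(x_n,x_k)^2 \le 2 f(x_n)^2 + 2 f(x_k)^2 - 4 r^2 \to 0$; so $(x_n)$ is Cauchy, and by completeness it converges to a point $c$. Since $f$ is $1$-Lipschitz it is continuous, so $f(c) = r$. If $c'$ also satisfied $f(c') = r$, the same inequality with $p = c$, $q = c'$ would force $dist(c,c') = 0$, giving uniqueness. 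Finally, because $\gamma$ is an isometry with $\gamma^{-1} Y = Y$, we have $f(\gamma \cdot x) = \sup_{y \in Y} dist(\gamma \cdot x, y) = \sup_{y \in Y} dist(x, \gamma^{-1} \cdot y) = \sup_{y' \in Y} dist(x, y') = f(x)$ for all $x$; thus $\gamma$ sends the unique minimizer of $f$ to itself, i.e. $\gamma \cdot c = c$, which is the desired fixed point.

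The main obstacle is the existence and uniqueness of the circumcenter, since this is the only place where the CAT(0) hypothesis (beyond being a geodesic space) and completeness are genuinely used; once the midpoint comparison inequality is available, it reduces to the Cauchy-sequence argument above. I would note that this is the classical Bruhat--Tits fixed point theorem and could alternatively be cited from \cite{BH}, but the proof is short enough to record here.
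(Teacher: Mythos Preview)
Your proof is correct and follows exactly the approach the paper indicates: establish the existence and uniqueness of the circumcenter of a bounded set via the CAT(0) midpoint inequality, then observe that the isometry must fix this canonical point. The paper merely sketches this idea and refers to \cite{BH} for details, while you have written out the standard Bruhat--Tits argument in full.
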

\begin{proof}
The idea is to prove that, for any bounded subset in $M$ there exists a unique centre point for this bounded subset, and then prove this point is fixed by $\gamma$. One can see the proof of proposition 6.7 on page 231 in chapter II.6 of \cite{BH}. 
\end{proof}

Recall that the translation length $\gamma$ of an isometry of $M$ is defined as the infimum of translation of $\gamma$ over $M$. The following lemma gives us another viewpoint for the translation length of a single isometry.

\begin{lemma}\label{tl}
Let $M$ be complete CAT(0) space and $\gamma$ be an isometry of $M$. Then, for all $p\in M$, we have 
\[|\gamma|=\lim_{n\rightarrow+\infty}{\frac{dist(\gamma^{n} \cdot p,p)}{n}}.\]
\end{lemma}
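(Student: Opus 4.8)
The plan is to show that $a_{n}:=dist(\gamma^{n}\cdot p,p)$ is subadditive, so that the limit on the right-hand side exists, and then to identify that limit with $|\gamma|$.

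First I would check subadditivity: since $\gamma^{n}$ is an isometry, $dist(\gamma^{m+n}\cdot p,\gamma^{n}\cdot p)=dist(\gamma^{m}\cdot p,p)=a_{m}$, so the triangle inequality through the point $\gamma^{n}\cdot p$ gives $a_{m+n}\le a_{m}+a_{n}$. By Fekete's subadditive lemma the limit $\ell(p):=\lim_{n\to\infty}a_{n}/n$ exists and equals $\inf_{n}a_{n}/n$. It is also independent of the basepoint: from $dist(\gamma^{n}\cdot p,p)\le dist(\gamma^{n}\cdot p,\gamma^{n}\cdot q)+dist(\gamma^{n}\cdot q,q)+dist(q,p)=a_{n}(q)+2\,dist(p,q)$, dividing by $n$ and letting $n\to\infty$ gives $\ell(p)\le\ell(q)$, hence equality by symmetry; write $\ell$ for this common value. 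Taking $n=1$ in $\ell=\inf_{n}a_{n}(q)/n$ yields $\ell\le dist(\gamma\cdot q,q)$ for every $q\in M$, so $\ell\le|\gamma|$.

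The substance of the lemma is the reverse inequality $\ell\ge|\gamma|$. Since $\ell=\inf_{n}a_{n}(p)/n$, this amounts to $dist(\gamma^{n}\cdot p,p)\ge n|\gamma|$ for all $p$ and $n$, equivalently to the identity $|\gamma^{n}|=n|\gamma|$ (the bound $|\gamma^{n}|\le n|\gamma|$ being immediate from the triangle inequality along the orbit $p,\gamma\cdot p,\dots,\gamma^{n}\cdot p$). If $\gamma$ is elliptic this is vacuous, and in fact the orbit is then bounded, so $\ell=0=|\gamma|$ outright. If $\gamma$ is hyperbolic, the set of points realizing $|\gamma|$ is a nonempty closed convex $\gamma$-invariant subset which, by the product decomposition of Proposition \ref{pdt} applied to an axis of $\gamma$, splits as a product $Y\times\mathbb{R}$ on which $\gamma$ acts as translation by $|\gamma|$ in the $\mathbb{R}$-factor; the nearest-point projection $\pi$ onto it is $1$-Lipschitz and $\gamma$-equivariant, so $dist(\gamma^{n}\cdot p,p)\ge dist(\pi(\gamma^{n}\cdot p),\pi(p))=dist(\gamma^{n}\cdot\pi(p),\pi(p))=n|\gamma|$.

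The remaining case, $\gamma$ parabolic with $|\gamma|>0$ (which genuinely occurs, as the example $\mathbb{R}\times\mathbb{H}^{2}$ shows), is where I expect the main obstacle: the set of minimizers is empty, so there is no projection to exploit, and one must instead work with the geodesic segment $[p,\gamma^{n}\cdot p]$ directly, using the CAT(0) convexity of distance functions between geodesics to extract a point displaced by $\gamma$ a distance at most $\tfrac1n\,dist(\gamma^{n}\cdot p,p)$. This is precisely the classical identity $|\gamma^{n}|=n|\gamma|$ for isometries of complete CAT(0) spaces, which I would either carry out via this convexity estimate or invoke directly from \cite{BH}. Combining $\ell\le|\gamma|$ with $\ell\ge|\gamma|$ gives the lemma; everything except the parabolic estimate is formal.
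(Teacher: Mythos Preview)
The paper gives no self-contained proof of this lemma; it simply cites Lemma~6.6 of \cite{BGS} and Lemma~3.4 of \cite{Wu12}. Your outline is therefore already more detailed than what the paper offers, and the overall strategy---subadditivity of $n\mapsto dist(\gamma^{n}\cdot p,p)$, Fekete's lemma, basepoint independence, and reduction of the inequality $\ell\ge|\gamma|$ to the identity $|\gamma^{n}|=n|\gamma|$---is exactly the standard one carried out in those references.

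Your treatment of the elliptic and hyperbolic cases is correct (in particular, the projection argument onto $\mathrm{Min}(\gamma)=Y\times\mathbb{R}$ is valid, since $\gamma$ does act trivially on the $Y$-factor). The one genuine gap is the parabolic case, which you only sketch: you assert that convexity of the distance function lets one ``extract a point displaced by $\gamma$ a distance at most $\tfrac{1}{n}\,dist(\gamma^{n}\cdot p,p)$,'' but you do not carry this out, and that step is precisely where the cited proofs do the real work. Since you conclude by saying you would invoke \cite{BH} or the references anyway, your proposal and the paper's proof ultimately rest on the same external input; the difference is only that you have unpacked the easy parts while the paper outsources everything.

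One stylistic remark: the trichotomy elliptic/hyperbolic/parabolic is not needed here. The arguments in \cite{BGS} and \cite{Wu12} establish $|\gamma^{n}|=n|\gamma|$ by a single convexity estimate that applies uniformly to all isometries, so your case split, while correct, does more work than necessary and leaves the hardest case to a citation that would already have covered the other two.
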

\begin{proof}
See lemma 6.6 on page 83 in \cite{BGS} or lemma 3.4 in \cite{Wu12}.
\end{proof}

As a direct corollary,
\begin{lemma}\label{22}
Let $M$ be complete CAT(0) space and $\gamma$ be an isometry of $M$. Then
\[|\gamma|^2=2|\gamma|.\]
\end{lemma}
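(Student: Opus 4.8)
The plan is to obtain this as an immediate consequence of Lemma \ref{tl}, reading the displayed identity as $|\gamma^2|=2|\gamma|$, i.e.\ the translation length of the square of $\gamma$ is twice the translation length of $\gamma$. Fix any point $p\in M$. Applying Lemma \ref{tl} to $\gamma$ with basepoint $p$, the real sequence $a_m:=dist(\gamma^m\cdot p,p)/m$ converges and $\lim_{m\rightarrow+\infty}a_m=|\gamma|$.

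Next I apply Lemma \ref{tl} to the isometry $\gamma^2$ with the same basepoint $p$. Since $(\gamma^2)^n=\gamma^{2n}$ for every positive integer $n$, this yields
\[
|\gamma^2|=\lim_{n\rightarrow+\infty}\frac{dist((\gamma^2)^n\cdot p,p)}{n}=\lim_{n\rightarrow+\infty}\frac{dist(\gamma^{2n}\cdot p,p)}{n}=2\lim_{n\rightarrow+\infty}\frac{dist(\gamma^{2n}\cdot p,p)}{2n}=2\lim_{n\rightarrow+\infty}a_{2n}.
\]
Finally, $(a_{2n})_{n\geq 1}$ is a subsequence of the convergent sequence $(a_m)_{m\geq 1}$, hence it has the same limit $|\gamma|$, and therefore $|\gamma^2|=2|\gamma|$, as claimed.

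I do not expect any genuine obstacle here: the only points to verify are the elementary identity $(\gamma^2)^n=\gamma^{2n}$ and the standard fact that a subsequence of a convergent sequence of real numbers converges to the same limit. For completeness one could also avoid Lemma \ref{tl} and argue directly from the definition of translation length: the triangle inequality together with $dist(\gamma^2\cdot p,\gamma\cdot p)=dist(\gamma\cdot p,p)$ gives $dist(\gamma^2\cdot p,p)\leq 2\,dist(\gamma\cdot p,p)$ and hence $|\gamma^2|\leq 2|\gamma|$, while the reverse inequality $|\gamma^2|\geq 2|\gamma|$ again reduces to the limiting argument of Lemma \ref{tl}; invoking Lemma \ref{tl} simply makes the statement a one-line corollary, consistent with how it is labelled.
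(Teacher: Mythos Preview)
Your proof is correct and is essentially identical to the paper's own argument: both apply Lemma~\ref{tl} to $\gamma^2$, rewrite $dist((\gamma^2)^n\cdot p,p)/n$ as $2\,dist(\gamma^{2n}\cdot p,p)/(2n)$, and use that the subsequence limit equals $|\gamma|$. Your interpretation of the displayed identity as $|\gamma^2|=2|\gamma|$ is also the intended one.
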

\begin{proof}
Let $p \in M$. By lemma \ref{tl}, 
\[|\gamma^2|=\lim_{n\rightarrow+\infty}{\frac{dist(\gamma^{2n} \cdot p,p)}{n}}=2 \lim_{n\rightarrow+\infty}{\frac{dist(\gamma^{2n} \cdot p,p)}{2n}}=2|\gamma|.\]
\end{proof}

A group $G$ acting on a metric space $X$ is called \textsl{properly} if for each compact subset $K\subset X$, the set $K \cap gK$ is nonempty for only finitely many $g$ in $G$. 
\begin{lemma}\label{z2ip}
Let $M$ be a complete visible CAT(0) Riemannian manifold and $G=\mathbb{Z}\oplus \mathbb{Z}$, a free abelian group of rank 2, act properly on $M$ by isometries. Then for any nontrivial $g\in G$, $g$ is parabolic. 
\end{lemma}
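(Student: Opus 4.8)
The plan is to argue by contradiction: suppose some nontrivial $g \in G = \mathbb{Z}\oplus\mathbb{Z}$ is \emph{not} parabolic. Since $M$ is a complete (visible, hence) CAT(0) space, by Theorem \ref{class} the only remaining possibilities are that $g$ is elliptic or hyperbolic. I would handle these two cases separately, in each case extracting a contradiction with the hypothesis that $G$ acts \emph{properly}. The key structural tool will be Proposition \ref{pdt} (the product decomposition along a parallel set) together with Lemma \ref{product}, which together let me split off an $\mathbb{R}$-factor on which a hyperbolic element translates.

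First, the elliptic case. If $g \neq 1$ is elliptic, consider a finite-index subgroup argument or, more directly, pick a generating pair $g = a$, and let $b$ be a second generator so that $G = \langle a, b\rangle$. Since $a$ is elliptic it has a fixed point; I want to show the whole group then has a bounded orbit. The cleanest route: the fixed point set $\mathrm{Fix}(a)$ is a nonempty closed convex subset of $M$, and since $b$ commutes with $a$, $b$ preserves $\mathrm{Fix}(a)$. Now $\langle a\rangle \cong \mathbb{Z}$ is infinite, so for a proper action the orbit $\langle a \rangle \cdot p$ of any point must be unbounded — but if $p \in \mathrm{Fix}(a)$ then $\langle a\rangle \cdot p = \{p\}$ is bounded, contradicting properness of the restriction of the $\langle a \rangle$-action (a single infinite-order isometry cannot fix a point and act properly, since $K \cap aK \ni p$ for all powers of $a$ when $K = \{p\}$, or more carefully for a small ball around $p$). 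Hence $a$ cannot be elliptic.

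Second, the hyperbolic case. Suppose $g = a$ is hyperbolic, so there is a geodesic line $c$ translated nontrivially by $a$. Let $P_c$ be the set of lines parallel to $c$; by Proposition \ref{pdt}, $P_c \cong P'_c \times \mathbb{R}$ with $P'_c$ closed convex. Since $b$ commutes with $a$, standard CAT(0) facts (the min set of $a$ is $P_c$, and it is $b$-invariant) show $b$ preserves $P_c$, and via Lemma \ref{product} $b$ decomposes as $(b', \beta)$ on $P'_c \times \mathbb{R}$; likewise $a = (\mathrm{id}, \tau_{|a|})$ up to an elliptic part on $P'_c$ — more precisely $a$ acts on $P'_c$ with $|a$ restricted$|=0$ by the axis structure, so after passing to $\mathrm{Min}(a)$ we may take $a$ to act as pure translation on the $\mathbb{R}$-factor. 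Then the $G$-action restricted to the $\mathbb{R}$-factor gives a homomorphism $G \to \mathrm{Isom}(\mathbb{R})$; the image is abelian so it lands in the translation subgroup $\mathbb{R}$, and the image of $a$ is the nonzero number $\pm|a|$. If the image of $b$ is a rational multiple of $|a|$, some word $a^m b^n \neq 1$ maps to $0 \in \mathbb{R}$, i.e. acts trivially on the $\mathbb{R}$-factor, hence is an isometry of $P'_c$; if $P'_c$ is bounded this element is elliptic there and we recycle the elliptic-case contradiction, and if $P'_c$ is unbounded then $M$ contains the infinite flat strip $P'_c \times \mathbb{R}$, contradicting semi-uniform visibility (Definition \ref{dosuvs}) — and in fact a complete visible CAT(0) \emph{manifold} already forbids flat half-planes. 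If the image of $b$ is irrationally related to $|a|$, then the image of $G$ in $\mathbb{R}$ is a dense rank-$2$ subgroup, so $G \cdot p$ is dense in the $\mathbb{R}$-fiber through $p$; taking $K$ a compact segment in that fiber shows $K \cap gK \neq \emptyset$ for infinitely many $g$, contradicting properness. Either way $a$ is not hyperbolic.

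Combining the two cases, $g$ is neither elliptic nor hyperbolic, hence parabolic, which is what we wanted. The main obstacle I anticipate is the bookkeeping in the hyperbolic case: pinning down exactly how $a$ and $b$ decompose on $P'_c \times \mathbb{R}$ (in particular that one may assume $a$ acts as a pure $\mathbb{R}$-translation after restricting to $\mathrm{Min}(a)$, and that $b$ genuinely preserves this splitting), and then splitting cleanly into the "commensurable translation lengths" subcase (reduce to an isometry of $P'_c$, then invoke visibility/no-flat-strip or the elliptic argument) versus the "dense image" subcase (direct contradiction with properness). The elliptic case, by contrast, should be short once one observes that an infinite-order isometry with a fixed point cannot generate a proper action.
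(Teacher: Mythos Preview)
Your overall strategy matches the paper's, and the elliptic case is fine. The hyperbolic case, however, has a real gap and is organized in a way that obscures the clean argument the paper gives.

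First, you slide between $P_c$ and $\mathrm{Min}(a)$ as if they coincide. In general $\mathrm{Min}(a)\subseteq P_c$ but equality need not hold; on $P_c\cong P'_c\times\mathbb{R}$ the element $a$ decomposes as $(a',\tau_{|a|})$ with $a'$ possibly nontrivial on $P'_c$. The paper avoids this by working with $\mathrm{Min}(g)=Y\times\mathbb{R}$ from the outset (Theorem II.6.8 in \cite{BH}), where $g$ acts \emph{trivially} on $Y$; this is exactly what makes the endgame short.

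Second, and more seriously, your irrational subcase does not go through as written. You assert that $G\cdot p$ is dense in the $\mathbb{R}$-fiber through $p$, but the orbit $a^n b^m\cdot(y_0,0)=(b'^{\,m}\!\cdot y_0,\,n|a|+mt)$ lies in that fiber only when $b'^{\,m}\!\cdot y_0=y_0$. Without first fixing a point of $Y$ (or $P'_c$) for $b'$, a dense image in the $\mathbb{R}$-factor does \emph{not} contradict properness on $M$: think of $\mathbb{Z}^2$ acting on $\mathbb{R}^2$ by $(n,m)\cdot(y,r)=(y+m,\,r+n+m\sqrt{2})$, which is proper even though the projection to the second coordinate has dense image.

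The fix is exactly what the paper does, and it eliminates your rational/irrational split entirely. Because $M$ is a visible CAT(0) \emph{manifold} (hence proper), the factor $Y$ in $\mathrm{Min}(g)=Y\times\mathbb{R}$ is bounded: otherwise Arzel\`a--Ascoli produces a ray in $Y$ and hence a flat half-plane in $M$. Then $h=(h_1,h_2)$ with $h_1$ an isometry of the bounded complete CAT(0) space $Y$, so Lemma~\ref{fpt} gives $x_1\in Y$ with $h_1\cdot x_1=x_1$. Since $g$ already acts trivially on $Y$, the line $\{x_1\}\times\mathbb{R}$ is $\langle g,h\rangle$-invariant, and a rank-two free abelian group cannot act properly on $\mathbb{R}$. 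So: establish boundedness of the cross-section first, then the whole argument collapses to a single line with a $\mathbb{Z}\oplus\mathbb{Z}$ action.
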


\begin{proof}
Since $G$ acts properly on $M$, for any nontrivial $g\in G$, $g$ is either parabolic or hyperbolic. Assume that $g$ is hyperbolic. Let $Min(g):=\{p\in M, dist(g\cdot p,p)=|g|\}$. From theorem 6.8 on page 231 in chapter II.6 of \cite{BH} we know that $Min(g)$ is isometric to the product $Y\times \mathbb{R}$ on which $g$ acts trivially on the $Y$ component, where $Y$ is a closed convex subset in $M$. Since $M$ is a visible CAT(0) manifold, $Y$ is bounded, otherwise there exists a flat half plane $[0,+\infty)\times \mathbb{R}$ in $X$ which is impossible in visible CAT(0) spaces (this step needs that the space is proper). Let $h \in G$ such that the group $<g,h>$, generated by $g$ and $h$, is a free abelian group of rank 2. Since $gh=hg$, $Min(g)$ is $h$-invariant. Since $h$ is an isometry, it must send a geodesic line to another geodesic line. Thus, by lemma \ref{product}, $h$ splits as $(h_{1},h_{2})$ where $h_{1}$ is an isometry on $Y$ and $h_{2}$ is an isometry on $\mathbb{R}$. Firstly $Y$ is a complete CAT(0) space because $Y$ is closed convex in $M$. Since $Y$ is bounded, by lemma \ref{fpt}, there exists $x_{1} \in Y$ such that $h_{1} \cdot x_{1}=x_{1}$. Hence $x_{1}\times \mathbb{R}$ is $<g,h>$-invariant. Since $G$ acts properly on $M$ and $<g,h>$ is a subgroup of $G$, $<g,h>$ also acts properly on $x_{1}\times \mathbb{R}$, which is impossible because the rank of $<g,h>$ is 2.
\end{proof}

\subsection{Mapping class groups} Let $S_{g,n}$ be a Riemann surface of genus $g$ with $n$ punctures, and $\Mod_{S_{g,n}}$ be the mapping class group of $S_{g,n}$, i.e., the group of isotopy classes of self-homeomorphisms of $S_{g,n}$ which preserve the orientation and the punctures. The following proposition lists the basic properties of $\Mod_{S_{g,n}}$, which will be used.
\begin{proposition}\label{mcgp} 
Let $\Mod_{S_{g,n}}$ be the mapping class group of $S_{g,n}$. Then

(1): $\Mod_{S_{g,n}}$ acts properly  on the Teichm\"uller space.

(2): $\Mod_{S_{g,n}}$ is finitely generated by Dehn-twists along simple closed curves.

(3): There exist torsion-free subgroups of finite index in $\Mod_{S_{g,n}}$. 
\end{proposition}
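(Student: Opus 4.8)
The three assertions are all classical, so the plan is not to develop new arguments but to assemble each from the standard theory of mapping class groups and Teichm\"uller spaces, taking care that every ingredient is valid for punctured surfaces and not merely for closed ones. I will treat the three parts in turn.

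For part (1), the plan is to work with the Teichm\"uller metric on $\mathbb{T}(S_{g,n})$, for which $\Mod_{S_{g,n}}$ acts by isometries, and to prove proper discontinuity via the marked length spectrum. Concretely, I would fix a finite collection $\{\alpha_1,\dots,\alpha_k\}$ of essential simple closed curves that \emph{fills} $S_{g,n}$; then the map $X\mapsto (\ell_{\alpha_1}(X),\dots,\ell_{\alpha_k}(X))$ recording hyperbolic lengths is a proper map from $\mathbb{T}(S_{g,n})$ to $\mathbb{R}^k_{>0}$, and lengths are comparable to Teichm\"uller displacement on compact sets. Given a compact $K\subset \mathbb{T}(S_{g,n})$, any $\phi\in\Mod_{S_{g,n}}$ with $\phi K\cap K\neq\emptyset$ must send each $\alpha_i$ to a curve whose length on some point of $K$ is bounded; since there are only finitely many isotopy classes of simple closed curves of bounded length on a fixed hyperbolic surface, and a filling system is rigid, only finitely many such $\phi$ occur. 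Alternatively one simply cites the properness of this action from \cite{IT92} or the Farb--Margalit primer.

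For part (2), this is the Dehn--Lickorish theorem and its extension to punctured surfaces: $\Mod_{S_{g,n}}$ is generated by finitely many Dehn twists along essential simple closed curves. I would cite Lickorish's generation result for the closed case together with Humphries' sharpening (a system of $2g+1$ twists suffices when $n=0$), and then reduce the punctured case to the closed/bounded case using the Birman exact sequence (forgetting a puncture) or a capping argument, keeping track of the fact that the point-pushing and puncture-permuting elements are themselves products of twists. No step here is delicate; the only content is the bookkeeping that threading in punctures does not destroy finite generation by twists.

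For part (3), I would invoke the classical Serre--Minkowski style argument. Consider the homomorphism $\rho_m\colon \Mod_{S_{g,n}}\to \operatorname{Aut}(H_1(S_{g,n};\mathbb{Z}/m))$ (or, to handle the punctures cleanly, the analogous action on $H_1$ of a finite characteristic cover, or on $\pi_1$ modulo a characteristic subgroup of finite index). For $m\geq 3$ the kernel $\Gamma(m)$ of $\rho_m$ is torsion-free: an element of finite order acting trivially on first homology mod $m\geq 3$ must act trivially on homology, and then a Lefschetz fixed-point / Nielsen realization type argument (or directly the Serre lemma on groups acting on $\mathbb{Z}/m$-homology) forces it to be the identity. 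Since $\Gamma(m)$ has finite index, it is the desired torsion-free finite-index subgroup; I would cite Serre's lemma and Ivanov's book. The only real obstacle across all three parts is cosmetic—verifying that the standard closed-surface statements transfer to $S_{g,n}$—and I expect to dispatch it by the Birman exact sequence and the remark that the homology representation still detects torsion in the punctured setting.
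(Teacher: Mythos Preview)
Your proposal is correct and in fact far more detailed than the paper's own proof, which consists of a single sentence referring the reader to Farb--Margalit's primer \cite{FM}. The arguments you sketch---properness via length functions of a filling system, the Dehn--Lickorish theorem together with the Birman exact sequence, and torsion-freeness of level-$m$ congruence subgroups for $m\geq 3$---are precisely the standard treatments found in that reference, so your approach and the paper's (cited) approach coincide.

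One small caution on part (2): puncture-permuting half-twists are \emph{not} in general products of Dehn twists along simple closed curves, so the statement as written in the paper really holds for the pure mapping class group (punctures fixed pointwise); this is an imprecision in the paper's formulation rather than in your argument, and it does not affect the later applications.
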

\begin{proof}
See the details in \cite{FM}.
\end{proof}

Bestvina, Kapovich and Kleiner in \cite{BKK} defined the \textsl{action dimension} of a group $G$, denoted by $actdim(G)$, to be the minimum dimension of a contractible manifold on which $G$ properly acts. For examples, the action dimension of $\mathbb{Z}$ is $1$. The action dimension of the fundamental group of a closed hyperbolic surface is $2$.  In \cite{BKK} there is a notation, which is called the \textsl{obstructor dimension} of a group $G$, denoted by $obdim(G)$ (see \cite{BKK}).
\begin{proposition}[BKK] \label{obdim}
(1): For any group $G$, $actdim(G)\geq obdim(G)$.

(2): The obstructor dimension is a quasi-isometric invariance.

\end{proposition}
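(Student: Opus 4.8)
The plan is to read both parts off the definition of $obdim$ from \cite{BKK}. Recall that a map $f\colon A\to B$ of metric spaces is \emph{uniformly proper} if there are non-decreasing $\rho_{-}\le\rho_{+}$ with $\rho_{-}(t)\to\infty$ and $\rho_{-}(d(a,a'))\le d(f(a),f(a'))\le\rho_{+}(d(a,a'))$. An \emph{$m$-obstructor} for a metric space $X$ is a uniformly proper map $f\colon \mathrm{Cone}(K)\to X$ from the metric cone on a finite complex $K$ carrying a non-vanishing equivariant van Kampen obstruction class (in the metastable range), the dimension of $K$ being calibrated so that $\mathbb{R}^{n}$ admits an $n$-obstructor but no $(n+1)$-obstructor; then $obdim(X):=\sup\{m:\,X\text{ admits an }m\text{-obstructor}\}$, and $obdim(G)$ is $obdim$ of a Cayley graph of $G$ (any two proper cocompact models for $G$ being quasi-isometric, the consistency of this is exactly part (2)). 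Note that if there is a uniformly proper map $Z\to Z'$ then $obdim(Z)\le obdim(Z')$, since one may post-compose an $m$-obstructor for $Z$ with it and the control functions compose.

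\emph{Part (2).} A quasi-isometry $\phi\colon X\to Y$ is itself uniformly proper, so by the monotonicity just noted $obdim(X)\le obdim(Y)$; applying the same to a coarse inverse of $\phi$ (again a quasi-isometry) gives the reverse inequality. This part is formal.

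\emph{Part (1).} Suppose $G$ acts properly by isometries on a contractible $d$-manifold $W$, which we may take to be a proper metric space by equipping it with a complete $G$-invariant Riemannian metric. Fix $x_{0}\in W$ and consider the orbit map $o\colon G\to W$, $g\mapsto g\cdot x_{0}$, with $G$ in a word metric: the triangle inequality and finiteness of the generating set bound $d_{W}(o(g),o(h))$ linearly in $d_{G}(g,h)$, while properness of the action forces $d_{W}(o(g),o(h))\to\infty$ as $d_{G}(g,h)\to\infty$, so $o$ is uniformly proper and hence $obdim(G)\le obdim(W)$. It therefore remains to prove: \emph{a contractible $d$-manifold admits no $(d+1)$-obstructor}. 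The approach, following the coarse Alexander duality of Kapovich--Kleiner and the coarse van Kampen argument of \cite{BKK}, is: (i) replace a putative uniformly proper map $\mathrm{Cone}(K)\to W$ by a simplicial map between sufficiently fine Rips-type complexes, still uniformly proper, using that $W$ is locally Euclidean; (ii) put this map into \emph{coarse general position}, again exploiting the local Euclidean structure, so that images of disjoint simplices of $\mathrm{Cone}(K)$ meet only in a controlled, expected-dimensional coarse double-point set; (iii) read the obstruction to ``coarse disjointness'' as a class in the cohomology of $W$ at infinity which is the image of the van Kampen class of $K$, and invoke Poincar\'e--Lefschetz duality — which renders a contractible $d$-manifold homologically indistinguishable from $\mathbb{R}^{d}$ at infinity, and more precisely a coarse $PD_{d}$-space after suitable coarsening — to conclude that this class must vanish in the relevant degree once the obstructor complex exceeds the dimension budget $d$, contradicting non-vanishing of the van Kampen class of $K$. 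Tracking degrees against the calibration of $obdim$ then yields exactly $obdim(W)\le d$.

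The main obstacle is step (iii). The classical van Kampen obstruction argument uses Alexander duality and transversality in a \emph{compact} Euclidean target, and transplanting it to a non-compact — and not necessarily cocompact or bounded-geometry — contractible $d$-manifold requires the full coarse duality machinery of Kapovich--Kleiner, together with the verification that a contractible \emph{topological} $d$-manifold (not merely a contractible $d$-dimensional complex — that distinction is precisely what lets $obdim$ detect manifold dimension inside $actdim$) behaves coarsely like $\mathbb{R}^{d}$, and that passing to the coarse setting does not annihilate the obstructor's class. Everything else — the orbit-map reduction and all of part (2) — is soft.
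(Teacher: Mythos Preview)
The paper does not prove this proposition at all: its entire proof is a citation (``see theorem 1 in \cite{BKK} for part (1) and remark 11 in \cite{BKK} for part (2)''). Your proposal is therefore not competing with an argument in the paper but is an attempt to sketch the cited BKK proof itself, and as such it is a reasonable outline of their strategy --- monotonicity of $obdim$ under (uniformly) proper maps gives part (2) and reduces part (1) to the statement that a contractible $d$-manifold carries no $(d+1)$-obstructor, which is where BKK's coarse van Kampen/Alexander duality argument lives. You are also honest that step (iii) is the hard content and that you are only pointing at it.

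There is one genuine slip. You write that one may equip the contractible manifold $W$ ``with a complete $G$-invariant Riemannian metric'' so that the orbit map becomes uniformly proper. The definition of $actdim$ in \cite{BKK} allows $W$ to be a \emph{topological} contractible manifold with a merely proper topological $G$-action; there is no smooth structure and no action by diffeomorphisms assumed, so you cannot in general produce a $G$-invariant Riemannian metric. The repair is that the orbit map $g\mapsto g\cdot x_0$ is already \emph{topologically} proper (preimages of compacta are finite, directly from properness of the action), and BKK's obstruction argument is phrased for proper maps $\mathrm{Cone}(K)\to W$ into contractible manifolds rather than for uniformly proper maps into metric spaces; so your step (iii) should be stated in that language. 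For the application in the present paper this wrinkle is invisible anyway, since there $W$ is Teichm\"uller space with a $\Mod_{S_{g,n}}$-invariant Riemannian metric by hypothesis.
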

\begin{proof}
See theorem 1 in \cite{BKK} for part (1) and remark 11 in \cite{BKK} for part (2).
\end{proof}

In \cite{Despto} Z. Despotovic proved that the action dimension of the mapping class group $actdim(\Mod_{S_{g,n}})=6g-6+2n$ if $3g+n\geq 5$ (see page 6 of \cite{FW10}). In fact, she proved the obstructor dimension $obdim(\Mod_{S_{g,n}})=6g-6+2n$ and then used part (1) of proposition \ref{obdim} to conclude $actdim(\Mod_{S_{g,n}})=6g-6+2n$. The following result tells that the action dimension is preserved by subgroups of mapping class groups, up to finite index.
\begin{proposition}[Despotovic] \label{adom}
If $3g+n\geq 5$, then for any finite index subgroup $G$ of $\Mod_{S_{g,n}}$, $actdim(G)=6g-6+2n$.
\end{proposition}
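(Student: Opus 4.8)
The plan is to establish the two inequalities $actdim(G)\le 6g-6+2n$ and $actdim(G)\ge 6g-6+2n$ separately; neither step requires reproving Despotovic's computation for $\Mod_{S_{g,n}}$ from scratch, only assembling the pieces already recorded above. For the upper bound one uses the proper action on Teichm\"uller space, and for the lower bound one uses the quasi-isometry invariance of the obstructor dimension together with the inequality $actdim\ge obdim$.

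\emph{Upper bound.} The Teichm\"uller space $\mathbb{T}(S_{g,n})$ is a contractible manifold of real dimension $6g-6+2n$ (it is homeomorphic to an open ball of that dimension, e.g.\ via Fenchel--Nielsen coordinates). By part (1) of Proposition \ref{mcgp}, $\Mod_{S_{g,n}}$ acts properly on $\mathbb{T}(S_{g,n})$, and hence so does its subgroup $G$. Since $actdim(G)$ is by definition the minimum dimension of a contractible manifold admitting a proper $G$-action, this gives $actdim(G)\le 6g-6+2n$. Note that the action need not be free, so one does not need to pass to a torsion-free subgroup here, although part (3) of Proposition \ref{mcgp} would provide one if desired.

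\emph{Lower bound.} Since $\Mod_{S_{g,n}}$ is finitely generated (part (2) of Proposition \ref{mcgp}) and $G$ has finite index in it, $G$ is finitely generated, and the inclusion $G\hookrightarrow\Mod_{S_{g,n}}$ is a quasi-isometry for the word metrics (a finite index subgroup is always quasi-isometric to the ambient group). By part (2) of Proposition \ref{obdim}, the obstructor dimension is a quasi-isometry invariant, so $obdim(G)=obdim(\Mod_{S_{g,n}})=6g-6+2n$, where the last equality is Despotovic's theorem recalled above. Then part (1) of Proposition \ref{obdim} gives $actdim(G)\ge obdim(G)=6g-6+2n$. Combining this with the upper bound yields $actdim(G)=6g-6+2n$.

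The only step carrying genuine content is the quasi-isometry reduction: one must invoke that a finite index subgroup of a finitely generated group is finitely generated and quasi-isometric to the ambient group, so that the quasi-isometry invariance of $obdim$ applies; everything else is immediate from the definitions and from Propositions \ref{mcgp} and \ref{obdim}. Accordingly I do not anticipate any serious obstacle in carrying out this argument.
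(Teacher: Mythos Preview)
Your proposal is correct and follows essentially the same approach as the paper: the upper bound comes from the proper action on Teichm\"uller space, and the lower bound comes from the quasi-isometry invariance of $obdim$ applied to the finite index inclusion $G\hookrightarrow\Mod_{S_{g,n}}$ together with $actdim\ge obdim$. The only difference is that you spell out more carefully why the finite index subgroup is quasi-isometric to the ambient group (finite generation plus the standard fact), which the paper states in one sentence.
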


\begin{proof}
Let $G$ be a subgroup of $\Mod_{S_{g,n}}$ with finite index. So $G$ endowed with a word metric is quasi-isometric to $\Mod_{S_{g,n}}$. Since the obstructor dimension of $\Mod_{S_{g,n}}$ $obdim(\Mod_{S_{g,n}})=6g-6+2n$ (see \cite{Despto}), by part (2) of proposition \ref{obdim} we have $obdim(G)=6g-6+2n$. Thus from part (1) of proposition \ref{obdim} we know that $actdim(G)\geq 6g-6+2n$.

On the other hand it is obvious that $actdim(G)\leq 6g-6+2n$ because $G$ acts properly on the Teichm\"uller space which is contractible. Hence $actdim(G)=6g-6+2n$. 

\end{proof} 

\section{Proofs of theorem \ref{mt-1} and theorem \ref{class}}\label{mt12}
Before we go to prove theorem \ref{mt-1}, let us control the size of the fixed points of parabolic isometries.

\begin{proposition}\label{lt1}
Let $M$ be a complete semi-uniformly visible CAT(0) space and $\gamma$ be a parabolic isometry on $M$. Then, the number of the fixed points of $\gamma$ satisfy
\[\#\{x \in M(\infty): \gamma \cdot x=x\}\leq 1.\]
\end{proposition}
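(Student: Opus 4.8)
The plan is to argue by contradiction: suppose $\gamma$ fixes two distinct points $x\neq y\in M(\infty)$ and derive a contradiction with the assumption that $\gamma$ is parabolic. First I would use that $M$ is visible to obtain a geodesic line $c\colon\mathbb{R}\to M$ with $c(-\infty)=x$ and $c(+\infty)=y$. Since $\gamma$ fixes both $x$ and $y$, the image $\gamma\cdot c$ is again a geodesic line joining $x$ to $y$. The elementary observation here is that any two geodesic lines $c_1,c_2$ with the same pair of endpoints in $M(\infty)$ are parallel: the function $t\mapsto dist(c_1(t),c_2(t))$ is convex on $\mathbb{R}$ and bounded (it is bounded near $+\infty$ because the forward rays are asymptotic, and near $-\infty$ because the backward rays are asymptotic), hence constant. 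Thus $\gamma\cdot c$ is parallel to $c$, and since ``being parallel'' is an equivalence relation on geodesic lines, $\gamma$ preserves the set $P_c$ of geodesic lines parallel to $c$.

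Next I would apply Proposition \ref{pdt} to identify $P_c$ isometrically with a product $P'_c\times\mathbb{R}$, where $P'_c$ is a closed convex subset of $M$. The fibers $\{p\}\times\mathbb{R}$ are precisely the geodesic lines parallel to $c$, and $\gamma$ permutes them: if $\ell$ is parallel to $c$ then $\ell$ has endpoints $x,y$, hence so does $\gamma\cdot\ell$, and therefore $\gamma\cdot\ell$ is again parallel to $c$. So Lemma \ref{product} yields a splitting $\gamma=(\gamma_1,\gamma_2)$ with $\gamma_1$ an isometry of $P'_c$ and $\gamma_2$ an isometry of $\mathbb{R}$. Moreover $\gamma_2$ must be orientation-preserving, i.e.\ a translation $t\mapsto t+b$ (possibly trivial): an orientation-reversing $\gamma_2$ would interchange the two ends of the $\mathbb{R}$-factor, hence interchange $x$ and $y$, contradicting that $\gamma$ fixes each of them.

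The crucial step is then to observe that $P'_c$ is \emph{bounded}. Indeed, if $P'_c$ were unbounded, then $P_c=P'_c\times\mathbb{R}$ would be a totally geodesic convex subset of $M$ of the form $U\times\mathbb{R}$ with $U=P'_c$ unbounded, i.e.\ an infinite-flat-strip, contradicting the semi-uniform visibility of $M$ (Definition \ref{dosuvs}). Now $P'_c$, being a closed convex subset of the complete CAT(0) space $M$, is itself a complete CAT(0) space; it is bounded and $\gamma_1$-invariant, so Lemma \ref{fpt} provides a fixed point $p\in P'_c$ of $\gamma_1$. Consequently $\{p\}\times\mathbb{R}\subseteq M$ is a $\gamma$-invariant geodesic line on which $\gamma$ acts through $\gamma_2$: if $\gamma_2=\mathrm{id}$ then $\gamma$ fixes this line pointwise and is elliptic, and if $\gamma_2$ is a nontrivial translation then $\gamma$ translates this line nontrivially and is hyperbolic. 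In either case $\gamma$ is not parabolic, a contradiction; hence $\gamma$ can fix at most one point of $M(\infty)$.

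I expect the heart of the argument — and the one place where the hypothesis is used in an essential way — to be the third step: recognizing that the ``no infinite-flat-strip'' clause in semi-uniform visibility is exactly what is needed to force the convex factor $P'_c$ to be bounded, so that the classical centre-point fixed-point theorem applies. The other ingredients (producing $c$ from visibility, the parallelism of $c$ and $\gamma\cdot c$, and the product splitting of $\gamma$) are routine applications of the CAT(0) facts collected in Section \ref{np}.
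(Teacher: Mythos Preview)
Your proposal is correct and follows essentially the same route as the paper's proof: contradict by producing a $\gamma$-invariant product $P'_c\times\mathbb{R}$, use semi-uniform visibility to force $P'_c$ bounded, fix a point of $\gamma_1$ via Lemma~\ref{fpt}, and conclude $\gamma$ is semisimple. The only cosmetic difference is that the paper disposes of $\gamma_2$ by invoking Lemma~\ref{R1} (every isometry of $\mathbb{R}$ is elliptic or hyperbolic), whereas you first observe that $\gamma_2$ must be orientation-preserving since $\gamma$ fixes $x$ and $y$ individually; both arguments lead to the same dichotomy.
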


\begin{proof}
Assume not. That is $|Fix(\gamma)|\geq 2$, where $Fix(\gamma)$ denotes the set of the fixed points of $\gamma$ in $M(\infty)$. Let $x\neq y \in Fix(\gamma)$. Since $M$ is visible, there exists a geodesic line $c: \mathbb{R}\to M$ such that $c(-\infty)=x$ and $c(+\infty)=y$. Let $P_{c}$ be the set of geodesic lines which are parallel to $c$. By proposition \ref{pdt}, $P_{c}$ is isometric to the product $P'_{c}\times \mathbb{R}$ where $P'_{c}$ is a convex subset in $M$. Since $M$ is semi-uniformly visible, $P'_{c}$ is bounded, otherwise there exists an infinite-flat-strips in $M$, which contradicts the definition of semi-uniformly CAT(0) spaces.

Since $c(-\infty),c(+\infty) \in Fix(\gamma)$, $\gamma \cdot (c(\mathbb{R}))$ is also geodesic line which is parallel to $c$. In particular, $P_{c}=P'_{c}\times \mathbb{R}$ is a $\gamma$-invariant subset in $M$. From lemma \ref{product}, $\gamma$ splits as $(\gamma_{1},\gamma_{2})$ where $\gamma_{1}$ is an isometry on $P'_{c}$ and $\gamma_{2}$ is an isometry on $\mathbb{R}$. Firstly since $P'_{c}$ is convex in $M$, $P'_{c}$ is also a CAT(0) space. Since $P'_{c}$ is bounded, by lemma \ref{fpt}, there exists $x_{1} \in P'_{c}$ such that $\gamma_{1} \cdot x_{1}=x_{1}$. By lemma \ref{R1}, $\gamma_{2}$ is either elliptic or hyperbolic. 

\textsl{Case 1: $\gamma_{2}$ is elliptic.}

There exists $x_{2}\in \mathbb{R}$ such that $\gamma_{2}\cdot x_{2}=x_{2}$. In particular $\gamma=(\gamma_{1},\gamma_{2})$ fixes the point $(x_{1},x_{2})$, which means that $\gamma$ is elliptic, which contradicts the assumption that $\gamma$ is parabolic.

\textsl{Case 2: $\gamma_{2}$ is hyperbolic.}

Since $\gamma_{1}\cdot x_{1}=x_{1}$, $\gamma=(\gamma_{1},\gamma_{2})$ acts on the line $x_{1}\times \mathbb{R}$ as a translation. From the definition of hyperbolic isometries, $x_{1}\times \mathbb{R}$ is an axis for $\gamma$. In particular, $\gamma$ is hyperbolic, which also contradicts our assumption that $\gamma$ is parabolic.

\end{proof}

\begin{remark}\label{2}
If $M$ is proper, i.e., locally compact,  it is not hard to prove the existence of fixed points for parabolic isometry (see proposition 8.25 on page 275 in chapter II.8 of \cite{BH}). By part (1) of proposition \ref{sosuv}, proposition \ref{lt1} tells that that the fixed point of any parabolic isometry of a complete proper visible CAT(0) space is unique. One can also see \cite{FNS}.
\end{remark}

\begin{remark}\label{efgh}
If $M$ is a complete Gromov-hyperbolic CAT(0) space (may be no proper!), the existence of fixed points of any parabolic isometry is also guaranteed (see theorem 0.3 in \cite{BU} or one can also see \cite{Ghys}). I am grateful to Tushar Das for this point. 
\end{remark}  

%\begin{remark}
%The set of fixed points of any parabolic isometry on a complete visible CAT(0) space (without semi-uniform condition) may be empty. One can see the example in the \textbf{Appendix ADF}.
%\end{remark}

\begin{proposition}\label{gt2}
Let $M$ be a complete CAT(0) space and $\gamma$ be an isometry on $M$. If $|\gamma|>0$, then 
\[\#\{x \in M(\infty): \gamma \cdot x=x\}\geq 2.\]
\end{proposition}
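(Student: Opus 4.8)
The plan is to produce the two boundary fixed points by following a single orbit to $+\infty$ and to $-\infty$.

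First note that since $|\gamma|>0$ the isometry $\gamma$ has no fixed point in $M$, so it is either hyperbolic or parabolic. If $\gamma$ is hyperbolic, pick a geodesic line $c\colon\mathbb{R}\to M$ translated by $\gamma$ through the distance $|\gamma|>0$. Then $c(-\infty)$ and $c(+\infty)$ are distinct points of $M(\infty)$ (they are the endpoints of the nonconstant geodesic line $c$) and both are fixed by $\gamma$, and we are done. So assume from now on that $\gamma$ is parabolic.

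Fix a basepoint $p\in M$ and put $a_n=dist(\gamma^{n}\cdot p,p)$. From subadditivity of $n\mapsto a_n$ and lemma \ref{tl} one gets $n|\gamma|\le a_n\le n a_1$, so the map $n\mapsto\gamma^{n}\cdot p$ is a quasi-geodesic line in $M$. The key claim is that the geodesic segments $[p,\gamma^{n}\cdot p]$ converge, as $n\to+\infty$, to a geodesic ray; equivalently, that the Gromov products $(\gamma^{n}\cdot p\mid\gamma^{m}\cdot p)_p=\tfrac12\big(a_n+a_m-a_{|n-m|}\big)$ tend to $+\infty$ as $\min(n,m)\to\infty$, which is exactly the condition ensuring these segments form a Cauchy family of rays (see chapter II.8 of \cite{BH}). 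Call the limiting point $\xi^{+}\in M(\infty)$; applying the same to the negative orbit produces $\xi^{-}\in M(\infty)$. Since $\gamma$ extends to a homeomorphism of $M\cup M(\infty)$ and $\gamma\cdot(\gamma^{n}\cdot p)=\gamma^{n+1}\cdot p$ also converges to $\xi^{+}$, we get $\gamma\cdot\xi^{+}=\xi^{+}$, and likewise $\gamma\cdot\xi^{-}=\xi^{-}$. Finally one checks $\xi^{+}\ne\xi^{-}$: if they were a single point $\xi$, then $\gamma$ would fix $\xi$, and inspecting the behaviour of the whole orbit relative to the horoballs and the asymptotic rays at $\xi$ one is led, using $|\gamma|>0$ once more, either to a point of arbitrarily small displacement (so $|\gamma|=0$, absurd) or to a $\gamma$-invariant flat half-plane on which $\gamma$ acts as a nontrivial translation, which again exhibits two distinct fixed points in $M(\infty)$.

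The main obstacle is the convergence of the orbit asserted above. In the proper case this is immediate from Arzel\`a--Ascoli, but here $M$ need not be locally compact, so one must use positivity of the translation length directly. Two routes seem available: either push through the Gromov-product estimate above (exploiting that $a_n/n\to|\gamma|>0$ makes $a_n+a_m-a_{|n-m|}$ grow), or fix a sequence $q_k$ with $dist(q_k,\gamma\cdot q_k)\to|\gamma|$ — which must escape to infinity, since the displacement function is convex and its bounded closed convex sublevel sets would otherwise have nonempty intersection by completeness of $M$, contradicting that $|\gamma|$ is not attained — and take an ultralimit of the broken geodesics $\bigcup_{j\in\mathbb{Z}}\gamma^{j}[q_k,\gamma\cdot q_k]$, which become straighter and straighter as $k\to\infty$, to detect the two $\gamma$-fixed directions. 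Either way, the content is that a positive translation length forbids the orbit from spiralling or wandering in the boundary.
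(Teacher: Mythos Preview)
Your overall architecture matches the paper's: split into the hyperbolic and parabolic cases, and in the parabolic case produce two boundary fixed points by following the orbit forward and backward. But the two places you yourself flag as ``obstacles'' are in fact genuine gaps, and your sketched routes around them do not close.

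\textbf{Convergence of the orbit.} The assertion that ``Gromov products $\tfrac12(a_n+a_m-a_{|n-m|})\to+\infty$ as $\min(n,m)\to\infty$ is exactly the condition ensuring these segments form a Cauchy family of rays'' is not correct in a general CAT(0) space. That criterion characterises convergence in the Gromov boundary, which agrees with the visual (cone) boundary only under Gromov-hyperbolicity. Already in $\mathbb{R}^2$ one can take points $x_k$ at distance $k$ from the origin alternating between two fixed directions: the Gromov products tend to $+\infty$, yet the geodesic segments $[0,x_k]$ do not converge to a single ray. Moreover, you have not actually shown that $a_n+a_m-a_{|n-m|}\to\infty$: knowing only $a_k\ge k|\gamma|$ and $a_k\le k a_1$ gives the lower bound $2\min(n,m)|\gamma|-(|n-m|)(a_1-|\gamma|)$, which is useless when $|n-m|$ grows faster than $\min(n,m)$. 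Your alternative route via ultralimits of broken geodesics is only a heuristic; extracting two honest points of $M(\infty)$ from an ultralimit, without properness, is exactly the difficulty you are trying to avoid. The paper bypasses all of this by invoking the Karlsson--Margulis sublinear tracking theorem \cite{KM}: since $|\gamma|>0$, there exists $x\in M(\infty)$ (fixed by $\gamma$) and a ray $c$ from $p$ to $x$ with $\lim_n \tfrac1n\,dist(\gamma^n\cdot p,\, c(n|\gamma|))=0$, and likewise a point $y$ for $\gamma^{-1}$.

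\textbf{Showing $\xi^{+}\ne\xi^{-}$.} Your paragraph here is not an argument: ``inspecting the behaviour of the whole orbit relative to the horoballs\ldots one is led either to $|\gamma|=0$ or to a $\gamma$-invariant flat half-plane'' asserts a dichotomy without any mechanism to produce it, and a $\gamma$-invariant flat half-plane would in any case contradict parabolicity rather than merely supply two boundary points. The paper's argument is short and clean once Karlsson--Margulis is in hand: by the triangle inequality the sublinear tracking for $\gamma$ and $\gamma^{-1}$ gives
\[
\lim_{n\to\infty}\frac{dist\big(c(n|\gamma|),c'(n|\gamma|)\big)}{n}=\lim_{n\to\infty}\frac{dist(\gamma^{n}\cdot p,\gamma^{-n}\cdot p)}{n},
\]
and then lemma \ref{pro:angular}, lemma \ref{tl} and lemma \ref{22} turn this into $2|\gamma|\sin\!\big(\tfrac12\angle(x,y)\big)=|\gamma^2|=2|\gamma|$, whence $\angle(x,y)=\pi$ and $x\ne y$.

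In short: the structure is right, but the nonproper convergence step needs a real theorem (Karlsson--Margulis or an equivalent), and once you have sublinear tracking the angular computation gives $\xi^{+}\ne\xi^{-}$ for free.
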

\begin{proof}
Since $|\gamma|>0$, $\gamma$ is either hyperbolic or parabolic.

If $\gamma$ is hyperbolic, let $c:(-\infty,+\infty) \to M$ be an axis for $\gamma$. The conclusion follows from the fact that $\{c(-\infty),c(+\infty)\}$ belongs to $\{x \in M(\infty): \gamma \cdot x=x\}$.

If $\gamma$ is parabolic. Since $|\gamma|>0$, a special case of a result of Karlsson and Margulis \cite{KM} shows that $\gamma$ has a unique fixed point $x\in M(\infty)$ such that for every $p \in M$ and every geodesic ray $c:[0,+\infty)\to M$ with $c(0)=p$ and $c(+\infty)=x$ we have 
\[\lim_{n\rightarrow +\infty}{\frac{dist(\gamma^n \cdot p, c(n |\gamma|))}{n}}=0.\]

Similarly $|\gamma^{-1}|=|\gamma|>0$, $\gamma^{-1}$ has a unique fixed point $y\in M(\infty)$ such that for every $p \in M$ and every geodesic ray $c':[0,+\infty)\to M$ with $c'(0)=p$ and $c'(+\infty)=y$ we have 
\[\lim_{n\rightarrow +\infty}{\frac{dist(\gamma^{-n} \cdot p, c'(n |\gamma|))}{n}}=0.\]

By triangle inequality, we have 
\[\lim_{n\rightarrow+\infty}{\frac{dist(c(n |\gamma|), c'(n |\gamma|))}{n}}=\lim_{n\rightarrow+\infty}\frac{dist(\gamma^{n} \cdot p,\gamma^{-n}\cdot p)}{n}.\]

Since $\gamma$ is an isometry, by lemma \ref{pro:angular} and lemma \ref{tl}, we have
\[2 |\gamma| \sin(\frac{\angle(c(+\infty),c'(+\infty))}{2})=|\gamma^2|.\]

From lemma \ref{22}, we know that 
\[2 |\gamma| \sin(\frac{\angle(c(+\infty),c'(+\infty))}{2})=2|\gamma|.\]

Since $|\gamma|\neq 0$, $\angle(c(+\infty),c'(+\infty))=\pi \neq 0$. That is $\angle(x,y)\neq 0$. Since $x, y \in Fix(\gamma)$, we have
\[\#\{x \in M(\infty): \gamma \cdot x=x\}\geq 2.\]
 
\end{proof}

\begin{remark}
The result of Karlsson and Margulis above was also used by M. Bridson in \cite{Bridson10} to show that any Dehn-twist has zero translation length when a mapping class group of a closed surface with genus $g\geq 3$ acts on a complete CAT(0) space by isometries. 
\end{remark}

Now we are ready to prove theorem \ref{mt-1} and theorem \ref{class}.

\begin{proof}[Proof of theorem \ref{mt-1}] 
Suppose not, we assume that $|\gamma|>0$. From proposition \ref{gt2} we know that 
\[\#\{x \in M(\infty): \gamma \cdot x=x\}\geq 2.\]
On the other hand, since $M$ is complete semi-uniformly visible and $\gamma$ is parabolic, by proposition \ref{lt1}, we have
\[\#\{x \in M(\infty): \gamma \cdot x=x\}\leq 1\]
which is a contradiction.

\end{proof}

Since a complete proper visible CAT(0) space is semi-uniformly visible (see part (1) of proposition \ref{sosuv}), theorem \ref{mt-1} implies
\begin{theorem}(\cite{Wu12})
Let $M$ be a complete proper visible CAT(0) space. Then any parabolic isometry of $M$ has zero translation length. i.e., for any parabolic isometry $\gamma$ of $M$ we have $|\gamma|=0$.
\end{theorem}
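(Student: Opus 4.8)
The plan is to deduce this statement directly from Theorem~\ref{mt-1}: a complete proper visible CAT(0) space is, by part~(1) of Proposition~\ref{sosuv}, a complete \emph{semi-uniformly} visible CAT(0) space, and Theorem~\ref{mt-1} then gives $|\gamma|=0$ for every parabolic isometry $\gamma$. So the only thing to verify is the inclusion ``proper visible $\Rightarrow$ semi-uniformly visible'', i.e.\ that $M$ contains no infinite-flat-strip. For this I would argue by contradiction: if $U\times\mathbb{R}\subseteq M$ with $U$ unbounded, then fixing a geodesic line $c=\{x_0\}\times\mathbb{R}$ and applying Proposition~\ref{pdt} to its parallel set $P_c\cong P'_c\times\mathbb{R}$, the convex factor $P'_c$ contains $U$, hence is complete, unbounded and proper; Arzel\`a--Ascoli then produces a geodesic ray in $P'_c$, and therefore a flat half-plane $[0,+\infty)\times\mathbb{R}$ in $M$, contradicting visibility. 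This is exactly the proof of Proposition~\ref{sosuv}(1), so no new work is needed.

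For completeness I would also give the self-contained version that reuses the mechanism of Theorem~\ref{mt-1} rather than quoting it. Suppose $\gamma$ is parabolic with $|\gamma|>0$. By Proposition~\ref{gt2} the fixed-point set $Fix(\gamma)\subseteq M(\infty)$ has at least two points $x\neq y$; visibility provides a geodesic line $c$ from $x$ to $y$, and since $x,y$ are $\gamma$-fixed the parallel set $P_c=P'_c\times\mathbb{R}$ is $\gamma$-invariant, so by Lemma~\ref{product} $\gamma$ splits as $(\gamma_1,\gamma_2)$ on $P'_c\times\mathbb{R}$. Properness forces $P'_c$ bounded (the same Arzel\`a--Ascoli argument as above), so by Lemma~\ref{fpt} $\gamma_1$ has a fixed point $p_1$, while by Lemma~\ref{R1} $\gamma_2$ is elliptic or hyperbolic on $\mathbb{R}$. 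In the elliptic case $\gamma$ fixes a point of $P'_c\times\mathbb{R}$; in the hyperbolic case $\{p_1\}\times\mathbb{R}$ is an axis for $\gamma$. Either way $\gamma$ is not parabolic, a contradiction.

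There is essentially no obstacle at this stage: the real content has already been spent in Theorem~\ref{mt-1}, and behind it in the Karlsson--Margulis displacement estimate underlying Proposition~\ref{gt2} together with the splitting-and-boundedness argument of Proposition~\ref{lt1}. The only recurring point to keep in mind is the step ``proper $+$ visible $\Rightarrow$ the convex factor of a parallel set is bounded'', which uses that closed convex subsets of a proper space are proper and that visible spaces admit no flat half-planes.
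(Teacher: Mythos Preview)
Your proposal is correct and matches the paper's approach exactly: the paper derives this theorem in one line from Theorem~\ref{mt-1} together with part~(1) of Proposition~\ref{sosuv}, precisely as you do. Your additional ``self-contained version'' simply unpacks the proofs of Proposition~\ref{lt1} and Theorem~\ref{mt-1} in the proper case, which is fine but not required.
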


Since a complete Gromov-hyperbolic CAT(0) space is semi-uniformly visible (see part (2) of proposition \ref{sosuv}), by theorem \ref{mt-1} we immediately obtain
\begin{theorem}[Buyalo]
Let $M$ be a complete Gromov-hyperbolic CAT(0) space. Then any parabolic isometry of $M$ has zero translation length. i.e., for any parabolic isometry $\gamma$ of $M$ we have $|\gamma|=0$.
\end{theorem}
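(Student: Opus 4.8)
The plan is to deduce this statement immediately from Theorem \ref{mt-1}, so the only thing to supply is the inclusion of Gromov-hyperbolic CAT(0) spaces among semi-uniformly visible ones. First I would invoke Proposition \ref{sosuv}(2), which asserts precisely that every complete Gromov-hyperbolic CAT(0) space $M$ is a semi-uniformly visible CAT(0) space. Granting this, if $\gamma$ is any parabolic isometry of $M$, then $M$ satisfies the hypotheses of Theorem \ref{mt-1}, and that theorem yields $|\gamma|=0$. So the whole argument is two lines: cite Proposition \ref{sosuv}(2), then cite Theorem \ref{mt-1}.

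If one instead wants an argument tailored to the Gromov-hyperbolic setting rather than citing Theorem \ref{mt-1} as a black box, I would unwind its proof and specialize Proposition \ref{lt1} via $\delta$-thinness. Fix $\delta>0$ so that every geodesic triangle in $M$ is $\delta$-thin, and suppose for contradiction that $|\gamma|>0$. By Proposition \ref{gt2}, $\gamma$ fixes at least two points $x\neq y$ of $M(\infty)$; by Proposition \ref{gmhiv}, $M$ is visible, so there is a geodesic line $c$ from $x$ to $y$, and by Proposition \ref{pdt} the parallel set $P_{c}$ splits as $P'_{c}\times\mathbb{R}$ with $P'_{c}$ closed and convex. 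The key step is that $P'_{c}$ is bounded: if it were unbounded, $M$ would contain flat strips $[0,k]\times\mathbb{R}$ of arbitrarily large width $k$, and choosing $k$ much larger than $\delta$ produces a geodesic triangle inside such a strip which fails to be $\delta$-thin, exactly the contradiction used in the proof of Proposition \ref{sosuv}(2). Once $P'_{c}$ is bounded, Lemma \ref{fpt} gives a point of $P'_{c}$ fixed by the first factor of $\gamma$, and Lemma \ref{R1} applied to the $\mathbb{R}$-factor forces $\gamma$ to be elliptic or hyperbolic, contradicting parabolicity. This is merely the proof of Proposition \ref{lt1} fed into that of Theorem \ref{mt-1}, so no new idea is needed.

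The one point to be careful about, and the reason the statement is not entirely vacuous, is that none of the ingredients (Proposition \ref{sosuv}(2), Proposition \ref{gt2}, Proposition \ref{lt1}, Lemmas \ref{fpt} and \ref{R1}) uses local compactness of $M$; in particular the bounded-cross-section step must be run through $\delta$-thinness rather than through the Arzel\`a--Ascoli argument used for the proper case in Proposition \ref{sosuv}(1). With that caveat noted there is no real obstacle, and the statement recovers Buyalo's theorem from \cite{BU} along a different route, namely by factoring through the notion of semi-uniform visibility rather than through his original hyperbolicity argument.
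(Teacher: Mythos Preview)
Your proposal is correct and matches the paper's proof exactly: the paper derives this theorem in one line by citing Proposition~\ref{sosuv}(2) to get semi-uniform visibility and then invoking Theorem~\ref{mt-1}. Your additional unwinding of the argument is accurate and your caveat about avoiding properness is well taken, but it adds nothing beyond what is already contained in the proofs of Proposition~\ref{sosuv}(2), Proposition~\ref{lt1}, and Theorem~\ref{mt-1}.
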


\begin{remark}
We call a manifold $M$ is tame if $M$ is the interior of some compact manifold $\overline{M}$ with boundary. Phan conjectures in \cite{Phan} that let $M$ be a tame, finite volume, negatively curved manifold, then $M$ is not visible if the fundamental group of $M$ contains a parabolic isometry of $\tilde{M}$ with positive translation length. This conjecture is confirmed in \cite{Wu12}. Since the proof in this paper is different as the one in \cite{Wu12}, theorem \ref{mt-1} gives a new proof to confirm this conjecture. 
\end{remark}

\begin{remark}\label{tushar}
 A complete visible CAT(0) space may contain parabolic isometries with positive translation lengths. One possible example is the following: \textsl{Firstly find a complete CAT(0) space $M$ such that there exists a parabolic isometry $\gamma_{0}$ on it and the visual boundary is empty (one may construct $M$ by taking the convex hull of the orbits of zero under $\alpha$ on page 6 in section 1.3.3 in \cite{Vale}, which is modified by Edelstein's example in \cite{Edel64}). Then take the product of $M$ with $\mathbb{R}$ we get a space $N:=M\times \mathbb{R}$. Since the visual boundary of $N$ consists of two points which can be joined by a geodesic line, $N$ is a complete visible CAT(0) space. Consider the isometry $\gamma:N \to N$ which is defined as $\gamma \cdot (m,t)=(\gamma_{0}\cdot m,t+1)$. It is easy to see that $\gamma$ is a parabolic isometry on $N$ whose translation length $|\gamma|>0$.} I am greatly indebted to Tushar Das to share his idea on this example.  

%One possible example is as follows: Firstly construct a complete visible CAT(0) space $M$ such that the visual boundary $M(\infty)$ is empty and $M$ contains a parabolic isometry $\gamma_{0}$ with $|\gamma_{0}|=0$ (one possible example is the Edelstein example in \cite{Edel64}: Let $M$ be the convex hull of the orbit of zero under $\gamma_{0}$'s action; One can also see the example in section 1.3.3 in \cite{Vale}). Then let $N$ be the product space $N:=M\times \mathbb{R}$. Since the visual boundary of $N$ consists of two points which can be joined by a geodesic line, $N$ is a complete visible CAT(0) space. Consider the isometry $\gamma:N \to N$ which is defined as $\gamma \cdot (m,t)=(\gamma_{0}\cdot m,t+1)$. It is easy to see that $\gamma$ is a parabolic isometry on $N$ whose translation length $|\gamma|=1>0$. This ``example'' also tells the proposition \ref{lt1} may be not true for complete visible CAT(0) spaces. I am greatly indebted to Tushar Das to share his idea on this example.    
\end{remark}

Theorem \ref{class} is an easy application of theorem \ref{mt-1}.

\begin{proof}[Proof of theorem \ref{class}]  \texttt{Proof of (1)}: By definition.

\texttt{Proof of (2)}: If $\gamma$ is hyperbolic, by the definition we know that $|\gamma|>0$.

If $|\gamma|>0$, assume that $\gamma$ was not hyperbolic, so $\gamma$ is parabolic. From theorem \ref{mt-1} we have $|\gamma|=0$ which contradicts with our assumption. 

\texttt{Proof of (3)}: If $\gamma$ is parabolic, it is obvious that $\gamma$ does not have fixed points. $|\gamma|=0$ follows from theorem \ref{mt-1}. 

If $\gamma$ does not have fixed points and $|\gamma|=0$, the conclusion that $\gamma$ is parabolic follows from part (2).
\end{proof}

\section{Proof of theorem \ref{mt-2}}\label{somp}
Before we go to prove theorem \ref{mt-2}, let us make some preparations. The following proposition has been proven in different literature (see \cite{BF, KN04, McP99}). For completeness we still give the proof here. 

\begin{proposition}\label{fpfm}
Let $M$ be a complete, visible CAT(0) Riemannian manifold. Assume that the mapping class group $\Mod_{S_{g,n}}$ acts properly on $M$. If $3g+n\geq 5$, then there exists a point $x\in M(\infty)$ such that $\gamma \cdot x=x$ for all $\gamma \in \Mod_{S_{g,n}}$.
\end{proposition}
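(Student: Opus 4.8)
The plan is to exploit the algebraic structure of $\Mod_{S_{g,n}}$, namely that it contains free abelian subgroups of rank $2$ (generated, say, by Dehn twists along two disjoint non-isotopic simple closed curves; these exist precisely when $3g+n\geq 5$), together with the dimension obstruction of Despotovic (Proposition \ref{adom}) and the classification of isometries established in the previous section. First I would fix such a $\mathbb{Z}\oplus\mathbb{Z}$ subgroup $G<\Mod_{S_{g,n}}$. Since $\Mod_{S_{g,n}}$ acts properly on $M$ by isometries, so does $G$, and by Lemma \ref{z2ip} every nontrivial element of $G$ is parabolic. By Theorem \ref{mt-1} (applicable because a complete visible CAT(0) Riemannian manifold is semi-uniformly visible: $M$ is proper, so part (1) of Proposition \ref{sosuv} applies), each nontrivial $g\in G$ has $|g|=0$, hence by Proposition \ref{lt1} has a \emph{unique} fixed point $x_g\in M(\infty)$.

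The next step is to show that all these fixed points coincide, i.e. there is a single $x\in M(\infty)$ fixed by all of $G$. If $g,h$ are nontrivial commuting parabolics, then $h$ preserves $\mathrm{Fix}(g)\subset M(\infty)$; since $\mathrm{Fix}(g)=\{x_g\}$ is a single point, $h$ fixes $x_g$, and by uniqueness $x_h=x_g$. Running this over all of $G$ (using that $G$ is abelian) yields a common fixed point $x_G\in M(\infty)$ of the whole subgroup $G$.

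Now I would promote this to a fixed point of the full mapping class group. The group $\Mod_{S_{g,n}}$ acts on $M(\infty)$, which carries the discrete topology (the angular metric takes only the values $0$ and $\pi$ on a visible space, as noted after Lemma \ref{pro:angular}); alternatively I would argue directly. The stabilizer $\mathrm{Stab}(x_G)<\Mod_{S_{g,n}}$ contains $G\cong\mathbb{Z}\oplus\mathbb{Z}$. If $\mathrm{Stab}(x_G)$ had infinite index, or more to the point if $x_G$ were \emph{not} fixed by all of $\Mod_{S_{g,n}}$, one can run a dimension count: the horosphere $H$ at $x_G$ is, by Proposition \ref{hsie}, diffeomorphic to $\mathbb{R}^{k}$ with $k=\dim M-1$, it is $\mathrm{Stab}(x_G)$-invariant, and $\mathrm{Stab}(x_G)$ acts properly on it. Combined with Proposition \ref{adom} ($actdim$ of any finite-index subgroup of $\Mod_{S_{g,n}}$ equals $6g-6+2n=\dim\mathbb{T}(S_{g,n})$), this forces $\mathrm{Stab}(x_G)$ to already have finite index in $\Mod_{S_{g,n}}$ only when $\dim M\geq 6g-6+2n+1$; and then passing to a torsion-free finite-index subgroup (Proposition \ref{mcgp}(3)) and iterating the argument — each parabolic in a finite-index subgroup still fixes a unique point at infinity — one shows the orbit $\Mod_{S_{g,n}}\cdot x_G$ is finite, whence by Lemma \ref{fpt} (the finite orbit is a bounded, hence "centred", invariant set on the sphere at infinity, or rather one takes the circumcentre of the finite set of directions) the whole group fixes a point at infinity. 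Cleanly: I would show the $\Mod_{S_{g,n}}$-orbit of $x_G$ in $M(\infty)$ is finite and then take its (unique) circumcentre in the angular metric, which is $\Mod_{S_{g,n}}$-invariant; since distinct points of $M(\infty)$ are at angular distance $\pi$, a finite set of diameter $\pi$ has its circumcentre only if the set is a single point, giving $x:=x_G$ fixed by all of $\Mod_{S_{g,n}}$.

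The main obstacle I anticipate is precisely this last passage from a rank-$2$ abelian subgroup's fixed point to a genuinely $\Mod_{S_{g,n}}$-invariant point: a priori different conjugates $\alpha G\alpha^{-1}$ produce different fixed points $\alpha\cdot x_G$, and one must rule out a nondegenerate (finite but non-singleton) orbit. The discreteness of $M(\infty)$ in the angular metric for visible spaces is the crucial leverage — it makes "finite orbit" tantamount to "fixed point" via the circumcentre construction — but verifying that the orbit is finite (rather than merely that each point is individually nicely behaved) is where the real work lies, and it is there that one must feed in the action-dimension equality of Despotovic to prevent $\mathrm{Stab}(x_G)$ from being a thin infinite-index subgroup. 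I would handle this by first reducing to a torsion-free finite-index subgroup $\Gamma$, noting $\Gamma\cap\alpha G\alpha^{-1}$ is again $\mathbb{Z}\oplus\mathbb{Z}$ for suitable $\alpha$, and using that $\Gamma$ is generated by finitely many such abelian subgroups whose pairwise intersections are nontrivial, so that all their fixed points at infinity chain together to a common one.
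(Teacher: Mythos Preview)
Your first two steps coincide with the paper's: pick two commuting Dehn twists, observe they are parabolic (Lemma \ref{z2ip}), and note that a parabolic isometry of a proper visible CAT(0) space has a \emph{unique} fixed point at infinity, which commuting parabolics must then share. (A small lacuna: Proposition \ref{lt1} gives only $\#\mathrm{Fix}\leq 1$; you also need existence, which comes from properness as in Remark \ref{2}.)

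The genuine gap is the promotion from $G\cong\mathbb{Z}\oplus\mathbb{Z}$ to all of $\Mod_{S_{g,n}}$. The circumcentre device cannot help: in a visible space any two distinct boundary points are at angular distance $\pi$, so a finite subset of $M(\infty)$ with more than one element has \emph{no} circumcentre—exactly as you concede—so you are thrown back onto proving the orbit is a singleton, which is the original problem. The action-dimension route is also broken: Proposition \ref{adom} applies only to \emph{finite-index} subgroups, so it cannot be used to \emph{deduce} that $\mathrm{Stab}(x_G)$ has finite index; and $\dim M$ is not specified in this proposition ($M$ is an arbitrary visible CAT(0) manifold, not the Teichm\"uller space), so no dimension inequality is available to compare with $6g-6+2n$.

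The paper replaces all of this with a purely combinatorial step: when $3g+n\geq 5$ the curve complex is \emph{connected}, so any two simple closed curves $\alpha,\beta$ are joined by a chain $\alpha=\alpha_1,\dots,\alpha_k=\beta$ with $\alpha_i\cap\alpha_{i+1}=\emptyset$. Consecutive Dehn twists then commute and hence share their unique boundary fixed point, so $\mathrm{Fix}(\tau_\alpha)=\mathrm{Fix}(\tau_\beta)$ for \emph{all} simple closed curves. Since $\Mod_{S_{g,n}}$ is generated by Dehn twists (Proposition \ref{mcgp}(2)), the whole group fixes this common point. Your final sentence gestures at exactly this chaining, but you never name the curve-complex connectivity that powers it, and it is buried under the nonfunctional dimension and circumcentre arguments.
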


\begin{proof}

Let $\alpha$ be a simple closed curve on $S_{g,n}$ and $\tau_{\alpha}$ be the Dehn twist along $\alpha$ (see the definition of Dehn twist in \cite{FM}). Since $3g+n\geq 5$, there exists a simple closed curve $\beta$ which is disjoint with $\alpha$. Let $\tau_{\beta}$ be the Dehn twist along $\beta$. Since $<\tau_{\alpha},\tau_{\beta}>$ is a free abelian group of rank 2, by lemma \ref{z2ip}, $\tau_{\alpha}$ is parabolic. Since $M$ is proper visible, by remark \ref{2} there exists a unique $x \in M(\infty)$ such that $Fix(\tau_{\alpha})=\{x\}$. 

\textsl{Claim 1: If $\tau_{\alpha}\cdot \tau_{\beta}=\tau_{\beta}\cdot \tau_{\alpha}$, then $Fix(\tau_{\beta})=Fix(\tau_{\alpha})=\{x\}$.}

\textsl{Proof claim 1:} Since $\tau_{\alpha}\cdot \tau_{\beta}=\tau_{\beta}\cdot \tau_{\alpha}$ and $\tau_{\alpha}\cdot x=x$, $\tau_{\alpha}\cdot (\tau_{\beta}\cdot x)=\tau_{\beta}\cdot x$. $Fix(\tau_{\beta})=\{x\}$ follows from the fact that $Fix(\tau_{\alpha})=\{x\}$.

\textsl{Claim 2: For any simple closed curve $\beta$ on $S_{g,n}$, $Fix(\tau_{\beta})=\{x\}$.}

\textsl{Proof claim 2:} Let $\beta$ be a simple closed curve. Since $3g+n\geq 5$, the curve complex is connected (see theorem 4.3 in \cite{FM}). In particular, there exists a sequence of simple closed curves $\{\alpha_{i}\}_{i=1}^{k}$ such that $\alpha_{1}=\alpha, \alpha_{k}=\beta$ and $\alpha_{i}\bigcap \alpha_{i+1}=\emptyset$. Hence, by claim 1, 
\[Fix(\tau_{\beta})=Fix(\tau_{\alpha_{k-1}})=Fix(\tau_{\alpha_{k-2}})=\cdots =Fix(\tau_{\alpha_{1}})=\{x\}.\]

The conclusion follows from part (2) of proposition \ref{mcgp} and claim 2.
\end{proof}

\begin{proposition}\label{mcape}
Let $M$ be a complete, visible CAT(0) Riemannian manifold and the mapping class group $\Mod_{S_{g,n}}$ acts properly on $M$. If $3g+n\geq 5$, then any infinite ordered element $\phi \in \Mod_{S_{g,n}}$ acts as a parabolic isometry.
\end{proposition}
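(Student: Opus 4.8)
The plan is to show that an infinite-order $\phi\in\Mod_{S_{g,n}}$ is neither elliptic nor hyperbolic, whence it must be parabolic. First I would record two facts. By Proposition \ref{fpfm} there is a point $x\in M(\infty)$ with $\gamma\cdot x=x$ for all $\gamma\in\Mod_{S_{g,n}}$, so in particular $\phi\cdot x=x$; and a complete visible CAT(0) Riemannian manifold is locally compact, hence complete and proper, hence semi-uniformly visible by part (1) of Proposition \ref{sosuv}, so Theorem \ref{mt-1} applies to $M$. That $\phi$ is not elliptic is immediate: a fixed point $p\in M$ of $\phi$ would put the infinite group $\langle\phi\rangle$ inside the stabilizer of $p$, which is finite because the action is proper. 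So the whole problem is to rule out that $\phi$ is hyperbolic.

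Assume $\phi$ is hyperbolic, fix an axis $c$ of $\phi$, and set $a^{-}=c(-\infty)$, $a^{+}=c(+\infty)$; then $\phi$ fixes $a^{-}$ and $a^{+}$ and has no fixed point in $M$. I would first dispose of the case $x\notin\{a^{-},a^{+}\}$. By visibility choose a geodesic line $\ell$ from $x$ to $a^{+}$. If $\phi\cdot\ell=\ell$, then $\phi$ acts on $\ell$ as a fixed-point-free isometry of $\mathbb{R}$, i.e.\ as a nontrivial translation, so $\ell$ is an axis of $\phi$ and $\{x,a^{+}\}=\{a^{-},a^{+}\}$, forcing $x=a^{-}$ --- a contradiction. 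Hence $\phi\cdot\ell\neq\ell$, and since $\ell$ and $\phi\cdot\ell$ share both endpoints at infinity they are parallel. Let $P_{\ell}=P'_{\ell}\times\mathbb{R}$ be the parallel set of $\ell$ (Proposition \ref{pdt}), with $P'_{\ell}$ closed convex; $P_{\ell}$ is $\phi$-invariant since $\phi\cdot\ell$ is parallel to $\ell$, and $P'_{\ell}$ is bounded because $M$ contains no infinite-flat-strip. A bounded convex set contains no geodesic line, so the only geodesic lines in $P'_{\ell}\times\mathbb{R}$ are the fibres $\{y\}\times\mathbb{R}$; hence $\phi$ carries fibres to fibres and splits as $(\phi_{1},\phi_{2})$ by Lemma \ref{product}, with $\phi_{1}$ an isometry of $P'_{\ell}$ and $\phi_{2}$ an isometry of $\mathbb{R}$. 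By Lemma \ref{fpt} $\phi_{1}$ fixes some $y_{0}\in P'_{\ell}$, and by Lemma \ref{R1} $\phi_{2}$ is elliptic or hyperbolic. If $\phi_{2}$ is elliptic then $\phi$ fixes a point of $\{y_{0}\}\times\mathbb{R}\subseteq M$, contradicting that $\phi$ is hyperbolic; if $\phi_{2}$ is hyperbolic then $\{y_{0}\}\times\mathbb{R}$ is an axis of $\phi$ with the same endpoints as $\ell$, so once more $x=a^{-}$. Thus $x\notin\{a^{-},a^{+}\}$ cannot occur.

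Finally I would rule out $x\in\{a^{-},a^{+}\}$ using the Busemann cocycle at $x$. Fix a Busemann function $f$ at $x$; for each $\gamma\in\Mod_{S_{g,n}}$ the difference $f\circ\gamma-f$ is a constant $\beta(\gamma)$, and $\beta\colon\Mod_{S_{g,n}}\to\mathbb{R}$ is a homomorphism. For any Dehn twist $\tau$, the proof of Proposition \ref{fpfm} shows (via Lemma \ref{z2ip}) that $\tau$ is parabolic, so $|\tau|=0$ by Theorem \ref{mt-1}; since $f$ is $1$-Lipschitz, $|\beta(\tau)|\le\inf_{p}dist(\tau\cdot p,p)=|\tau|=0$, hence $\beta(\tau)=0$. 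As Dehn twists generate $\Mod_{S_{g,n}}$ (Proposition \ref{mcgp}(2)), $\beta\equiv 0$. But if $x$ is an endpoint of the axis $c$ of $\phi$, then $f\circ c$ is affine with slope $\pm1$, so $\beta(\phi)=\pm|\phi|\neq 0$ --- a contradiction. Hence $\phi$ is not hyperbolic, and therefore $\phi$ is parabolic. The main obstacle is the case $x\notin\{a^{-},a^{+}\}$: one has to see that an axis of a hyperbolic isometry of a semi-uniformly visible space absorbs all its fixed points at infinity, which is exactly where the exclusion of infinite-flat-strips is used, via the same product-splitting mechanism as in the proof of Lemma \ref{z2ip}; the remaining case becomes soft once the cocycle $\beta$ and Theorem \ref{mt-1} are in hand.
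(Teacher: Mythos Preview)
Your proof is correct, but the route in the crucial step is genuinely different from the paper's. Both arguments begin the same way: rule out elliptic by properness, assume $\phi$ hyperbolic with axis $c$, and use that in a proper visible CAT(0) space the fixed set of a hyperbolic isometry at infinity is exactly $\{c(-\infty),c(+\infty)\}$. Your Case~1 carefully proves this ``not hard to see'' claim via the parallel-set splitting, just as in Proposition~\ref{lt1} and Lemma~\ref{z2ip}; the paper simply asserts it. The divergence is in Case~2. The paper argues dynamically: since every $\sigma\in\Mod_{S_{g,n}}$ fixes the axis endpoint $x$, the displacement $dist(\sigma\cdot c(n|\phi|),c(n|\phi|))$ stays bounded, so $\{\phi^{-n}\sigma\phi^{n}\}$ meets only finitely many group elements by properness, hence some nontrivial power of $\phi$ commutes with $\sigma$; choosing $\sigma$ pseudo-Anosov so that $\langle\sigma,\phi\rangle$ is free of rank~2 (via Ivanov) yields a contradiction. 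You instead use the Busemann cocycle $\beta$ at $x$: Theorem~\ref{mt-1} forces $|\tau|=0$ for every Dehn twist, hence $\beta(\tau)=0$, and Dehn-twist generation gives $\beta\equiv 0$, contradicting $\beta(\phi)=\pm|\phi|\neq 0$.

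Your approach has the virtue of being self-contained within the paper: it feeds Theorem~\ref{mt-1} back into the moduli-space application and avoids importing Ivanov's result on free subgroups generated by pseudo-Anosovs. The paper's approach, on the other hand, is independent of Theorem~\ref{mt-1} and would go through for any finitely generated group acting properly with a global fixed point at infinity and containing an element with which $\phi$ generates a free group. One small remark on your Case~1: when $\phi\cdot\ell=\ell$ you conclude that $\ell$ is an axis; this is correct, but it uses the (standard) fact that the $\phi$-equivariant nearest-point projection onto $\ell$ is $1$-Lipschitz, so the translation distance along any $\phi$-invariant line equals $|\phi|$. You might make that explicit.
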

\begin{proof}
Suppose that there exists an element $\phi \in \Mod_{S_{g,n}}$ with infinite order which acts on $M$ as a hyperbolic isometry. Then there exists a geodesic line $\gamma:\mathbb{R}\rightarrow M$, an axis for $\phi$, such that $\phi \cdot \gamma(t)=\gamma(|\phi|+t)$ for all $t \in \mathbb{R}$. Since $M$ is a visible CAT(0) space, it is not hard to see that $Fix(\phi)=\{\gamma(+\infty),\gamma(-\infty)\}$. From proposition \ref{fpfm} we assume that $\gamma(+\infty)$ is fixed by $\Mod_{S_{g,n}}$.  Let $\sigma \in \Mod_{S_{g,n}}$, since $\sigma$ fixes $\gamma(+\infty)$ there exists some a $C>0$ such that $dist(\sigma \cdot \gamma(n\cdot|\phi|),\gamma(n\cdot|\phi|))\leq C$ for all $n>0$. Hence $dist((\phi^{-n}\cdot \sigma\cdot \phi^n)\cdot \gamma(0),\gamma(0))\leq C$. Since the action is proper, there exists a subsequence $\{n_{i}\}$ such that $\phi^{-n_{i}}\cdot \sigma \cdot \phi^{n_{i}}\equiv \phi^{-n_{1}}\cdot \sigma\cdot \phi^{n_{1}}$, hence $\phi^{n_{1}-n_{i}}\cdot \sigma= \sigma\cdot \phi^{n_{1}-n_{i}}$. Since $\sigma$ is arbitrary and $\phi$ has infinite order in $\Mod_{S_{g,n}}$, we can choose $\sigma$ to be pseudo-Anosov such that $\{\sigma,\phi\}$ generates a free group of rank 2 (see \cite{Iv92}). Since $\phi^{n_{1}-n_{i}}\cdot \sigma= \sigma\cdot \phi^{n_{1}-n_{i}}$, the group $<\sigma,\phi>$ contains a free abelian subgroup of rank $2$, which is a contradiction since $<\sigma,\phi>$ is a free group.     
\end{proof}

\begin{proposition}\label{mcghs}
Let $M$ be a complete, visible CAT(0) Riemannian manifold and the mapping class group $\Mod_{S_{g,n}}$ acts properly on $M$. If $3g+n\geq 5$, then there exists a horosphere $H$ such that every torsion free subgroup of $\Mod_{S_{g,n}}$ acts properly on $H$. 
\end{proposition}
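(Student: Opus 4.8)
The plan is to take a horosphere $H$ centered at the point of $M(\infty)$ that is fixed by the whole mapping class group (Proposition \ref{fpfm}), to observe that every torsion free subgroup preserves $H$ because its nontrivial elements are parabolic (Proposition \ref{mcape}), and then to note that properness of an isometric action on $M$ passes to any closed invariant subset, in particular to $H$. First I would invoke Proposition \ref{fpfm} to get a point $x\in M(\infty)$ with $\phi\cdot x=x$ for all $\phi\in\Mod_{S_{g,n}}$; then I would fix a geodesic ray $c:[0,+\infty)\to M$ with $c(+\infty)=x$, let $f=f_{c}$ be its Busemann function, and set $H:=\{q\in M:f(q)=0\}$. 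As a level set of the continuous convex function $f$ the set $H$ is closed in $M$, and by Proposition \ref{hsie} it is a submanifold diffeomorphic to $\mathbb{R}^{\dim M-1}$; I equip it with the metric induced from $M$.

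Next I would show that any torsion free subgroup $G\leq\Mod_{S_{g,n}}$ preserves $H$. Given a nontrivial $g\in G$, torsion freeness forces $g$ to have infinite order, so Proposition \ref{mcape} makes $g$ a parabolic isometry of $M$, and by the previous step $g$ fixes $x$. A parabolic isometry fixing $x$ leaves every horoball at $x$ invariant, hence leaves every horosphere at $x$ invariant; in particular $g\cdot H=H$. So $G$ acts on $H$ by isometries. This is the only place torsion freeness is used — it guarantees every nontrivial element is parabolic, so there is no need to check separately whether elliptic elements fixing $x$ preserve horospheres at $x$.

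Finally I would transfer properness. Since $G$ is a subgroup of $\Mod_{S_{g,n}}$, it acts properly on $M$. For a compact $K\subseteq H$, the set $K$ is compact as a subset of $M$, so $\{g\in G:gK\cap K\neq\emptyset\}$ is finite; hence $G$ acts properly on $H$. As the same $H$ works for every torsion free $G$, the proposition follows.

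I do not expect a genuine obstacle in this proof: the substance is carried by Proposition \ref{fpfm} (producing one point of $M(\infty)$ fixed by all of $\Mod_{S_{g,n}}$, via connectivity of the curve complex and the rank two free abelian subgroups generated by disjoint Dehn twists together with Lemma \ref{z2ip}) and by Proposition \ref{mcape} (ruling out hyperbolic infinite order mapping classes, via pseudo-Anosov elements generating free groups and properness of the action). The one detail to keep in mind is that compactness of a subset of $H$ is intrinsic, hence implies compactness in $M$, so properness descends to the horosphere without any local compactness hypothesis on it.
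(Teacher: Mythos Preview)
Your proof is correct and follows the paper's argument essentially verbatim: invoke Proposition \ref{fpfm} for the common fixed point $x\in M(\infty)$, choose a horosphere $H$ at $x$, use Proposition \ref{mcape} to see that every nontrivial element of a torsion free subgroup is parabolic and hence preserves $H$, and then pass properness from $M$ to $H$. The only cosmetic difference is that the paper phrases the last step via the inequality $d_{H}\geq d$ between the induced path metric on $H$ and the ambient metric, whereas you observe directly that a compact subset of $H$ is compact in $M$; these are the same observation.
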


\begin{proof}
From proposition \ref{fpfm}, there exists a point $x \in M(\infty)$ such that $\Mod_{S_{g,n}}$ fixes $x$. Let $H$ be a horosphere at $x$ and $G$ be a torsion free subgroup of $\Mod_{S_{g,n}}$. By proposition \ref{mcape} we know that $G$ consists of parabolic isometries except the unit. Hence, by proposition 8.25 on page 275 in chapter II.8 of \cite{BH}, $H$ is $G$-invariant because $G$ fixes $x$. Let $d$ be the metric of $M$ and  $d_{H}$ be the induced metric on $H$. It is obvious that $d_{H}(p,q)\geq d(p,q)$ for all $p,q \in H$. The conclusion that $G$ acts properly on $H$ follows easily from the facts that $G$ acts properly on $M$ and $d_{H}(p,q)\geq d(p,q)$.  
\end{proof}

Now we are ready to prove theorem \ref{mt-2}.

\begin{proof}[Proof of theorem \ref{mt-2}]
We argue it by contradiction. Assume that $\mathbb{M}(S_{g,n})$ admits a complete, visible CAT(0) Riemannian metric $ds^2$. Let $\mathbb{T}(S_{g,n})$ be the universal covering space of $(\mathbb{M}(S_{g,n}),ds^2)$, which is the Teichm\"uller space endowed with the pull back metric $ds^2$. By part (3) of proposition \ref{mcgp} we can assume that $G$ is a torsion free subgroup of $\Mod_{S_{g,n}}$ with finite index. By proposition \ref{mcghs}, there exists a horosphere $H$ such that $G$ acts properly on $H$. Proposition \ref{hsie} tells us that $H$ is homeomorphic to $\mathbb{R}^{6g-7+2n}$, in particular $H$ is a contractible manifold. By the definition of action dimension we know that 
\[actdim(G)\leq 6g-7+2n.\] 
On the other hand, since $3g+n\geq 5$ and $G$ is a finite index subgroup of $\Mod_{S_{g,n}}$, by proposition \ref{adom}, we have
\[actdim(G)=6g-6+2n\]
which is a contradiction. 
\end{proof}

\begin{remark} If we carefully check the proof of theorem \ref{mt-2}, we can conclude as follows: \textsl{The Teichm\"uller space $\mathbb{T}(S_{g,n})$ $(3g+n\geq 5)$ admits no complete $\Mod_{S_{g,n}}$-invariant CAT(0) Riemannian metric such that every Dehn twist has only one fixed point in the visual boundary of $\mathbb{T}(S_{g,n})$.} I am very grateful to Benson Farb for this point.
\end{remark}

\begin{remark}
If we consider the case that $n=0$ and $g\geq 3$. From lemma 4.5 in \cite{Wu12}, the proof of theorem \ref{mt-2} can also conclude as follows: \textsl{The Teichm\"uller space $\mathbb{T}(S_{g})$ $(g\geq 3)$ admits no complete $\Mod_{S_{g}}$-invariant CAT(0)  Riemannian metric such that the Dehn twist on some non-separating simple closed curve has only one fixed point in the visual boundary of $\mathbb{T}(S_{g})$.}
\end{remark}
  
\bibliographystyle{amsalpha}
\bibliography{ref}  

\providecommand{\bysame}{\leavevmode\hbox to3em{\hrulefill}\thinspace}
\providecommand{\MR}{\relax\ifhmode\unskip\space\fi MR }
% \MRhref is called by the amsart/book/proc definition of \MR.
\providecommand{\MRhref}[2]{%
  \href{http://www.ams.org/mathscinet-getitem?mr=#1}{#2}
}
\providecommand{\href}[2]{#2}
\begin{thebibliography}{{Tam}11}

\bibitem[BF06]{BF}
Jeffrey Brock and Benson Farb, \emph{Curvature and rank of {T}eichm\"uller
  space}, Amer. J. Math. \textbf{128} (2006), no.~1, 1--22.

\bibitem[BGS85]{BGS}
Werner Ballmann, Mikhael Gromov, and Viktor Schroeder, \emph{Manifolds of
  nonpositive curvature}, Progress in Mathematics, vol.~61, Birkh\"auser Boston
  Inc., Boston, MA, 1985.

\bibitem[BH99]{BH}
Martin~R. Bridson and Andr{\'e} Haefliger, \emph{Metric spaces of non-positive
  curvature}, Grundlehren der Mathematischen Wissenschaften [Fundamental
  Principles of Mathematical Sciences], vol. 319, Springer-Verlag, Berlin,
  1999.

\bibitem[BKK02]{BKK}
Mladen Bestvina, Michael Kapovich, and Bruce Kleiner, \emph{Van {K}ampen's
  embedding obstruction for discrete groups}, Invent. Math. \textbf{150}
  (2002), no.~2, 219--235.

\bibitem[BO69]{BO}
R.~L. Bishop and B.~O'Neill, \emph{Manifolds of negative curvature}, Trans.
  Amer. Math. Soc. \textbf{145} (1969), 1--49.

\bibitem[Bri]{Bridson10}
Martin~R. Bridson, \emph{Semisimple actions of mapping class groups on {${\rm
  CAT}(0)$} spaces}, Geometry of {R}iemann surfaces, London Math. Soc. Lecture
  Note Ser., vol. 368, pp.~1--14.

\bibitem[Buy98]{BU}
S.~V. Buyalo, \emph{Geodesics in {H}adamard spaces}, Algebra i Analiz
  \textbf{10} (1998), no.~2, 93--123.

\bibitem[Des06]{Despto}
Zrinka Despotovic, \emph{Action dimension of mapping class groups}, ProQuest
  LLC, Ann Arbor, MI, 2006, Thesis (Ph.D.)--The University of Utah.

\bibitem[Ede64]{Edel64}
Michael Edelstein, \emph{On non-expansive mappings of {B}anach spaces}, Proc.
  Cambridge Philos. Soc. \textbf{60} (1964), 439--447.

\bibitem[EO73]{EO}
P.~Eberlein and B.~O'Neill, \emph{Visibility manifolds}, Pacific J. Math.
  \textbf{46} (1973), 45--109.

\bibitem[FM12]{FM}
Benson Farb and Dan Margalit, \emph{A primer on mapping class groups},
  Princeton Mathematical Series, vol.~49, Princeton University Press,
  Princeton, NJ, 2012.

\bibitem[FNS06]{FNS}
Koji Fujiwara, Koichi Nagano, and Takashi Shioya, \emph{Fixed point sets of
  parabolic isometries of {CAT}(0)-spaces}, Comment. Math. Helv. \textbf{81}
  (2006), no.~2, 305--335.

\bibitem[Fuk84]{Fukaya84}
Kenji Fukaya, \emph{A finiteness theorem for negatively curved manifolds}, J.
  Differential Geom. \textbf{20} (1984), no.~2, 497--521.

\bibitem[FW10]{FW10}
Benson Farb and Shmuel Weinberger, \emph{The intrinsic asymmetry and
  inhomogeneity of {T}eichm\"uller space}, Duke Math. J. \textbf{155} (2010),
  no.~1, 91--103.

\bibitem[GdlH90]{Ghys}
{\'E}.~Ghys and P.~de~la Harpe (eds.), \emph{Sur les groupes hyperboliques
  d'apr\`es {M}ikhael {G}romov}, Progress in Mathematics, vol.~83, Birkh\"auser
  Boston Inc., Boston, MA, 1990, Papers from the Swiss Seminar on Hyperbolic
  Groups held in Bern, 1988.

\bibitem[Gro87]{Gromov87}
M.~Gromov, \emph{Hyperbolic groups}, Essays in group theory, Math. Sci. Res.
  Inst. Publ., vol.~8, Springer, New York, 1987, pp.~75--263.

\bibitem[HIH77]{HH}
Ernst Heintze and Hans-Christoph Im~Hof, \emph{Geometry of horospheres}, J.
  Differential Geom. \textbf{12} (1977), no.~4, 481--491 (1978).

\bibitem[IT92]{IT92}
Y.~Imayoshi and M.~Taniguchi, \emph{An introduction to {T}eichm\"uller spaces},
  Springer-Verlag, Tokyo, 1992, Translated and revised from the Japanese by the
  authors.

\bibitem[Iva88]{Iv88}
N.~V. Ivanov, \emph{Teichm\"uller modular groups and arithmetic groups}, Zap.
  Nauchn. Sem. Leningrad. Otdel. Mat. Inst. Steklov. (LOMI) \textbf{167}
  (1988).

\bibitem[Iva92]{Iv92}
Nikolai~V. Ivanov, \emph{Subgroups of {T}eichm\"uller modular groups},
  Translations of Mathematical Monographs, vol. 115, American Mathematical
  Society, Providence, RI, 1992, Translated from the Russian by E. J. F.
  Primrose and revised by the author.

\bibitem[{Ji}12]{Ji02}
L.~{Ji}, \emph{{Complete invariant geodesic metrics on outer spaces and
  Jacobian varieties of tropical curves}}, ArXiv e-prints (2012).

\bibitem[KM96]{KM99}
Vadim~A. Kaimanovich and Howard Masur, \emph{The {P}oisson boundary of the
  mapping class group}, Invent. Math. \textbf{125} (1996), no.~2, 221--264.

\bibitem[KM99]{KM}
Anders Karlsson and Gregory~A. Margulis, \emph{A multiplicative ergodic theorem
  and nonpositively curved spaces}, Comm. Math. Phys. \textbf{208} (1999),
  no.~1, 107--123.

\bibitem[KN04]{KN04}
Anders Karlsson and Guennadi~A. Noskov, \emph{Some groups having only
  elementary actions on metric spaces with hyperbolic boundaries}, Geom.
  Dedicata \textbf{104} (2004), 119--137.

\bibitem[Kra59]{Kravetz}
Saul Kravetz, \emph{On the geometry of {T}eichm\"uller spaces and the structure
  of their modular groups}, Ann. Acad. Sci. Fenn. Ser. A I No. \textbf{278}
  (1959), 35.

\bibitem[Lin71]{Linch}
Linch, \emph{Thesis}, Columbia University, (1971).

\bibitem[LSY04]{LSY04}
Kefeng Liu, Xiaofeng Sun, and Shing-Tung Yau, \emph{Canonical metrics on the
  moduli space of {R}iemann surfaces. {I}}, J. Differential Geom. \textbf{68}
  (2004), no.~3, 571--637.

\bibitem[LSY05]{LSY05}
\bysame, \emph{Canonical metrics on the moduli space of {R}iemann surfaces.
  {II}}, J. Differential Geom. \textbf{69} (2005), no.~1, 163--216.

\bibitem[Mas75]{Masur75}
Howard Masur, \emph{On a class of geodesics in {T}eichm\"uller space}, Ann. of
  Math. (2) \textbf{102} (1975), no.~2, 205--221.

\bibitem[McM00]{McMullen00}
Curtis~T. McMullen, \emph{The moduli space of {R}iemann surfaces is {K}\"ahler
  hyperbolic}, Ann. of Math. (2) \textbf{151} (2000), no.~1, 327--357.

\bibitem[MP99]{McP99}
John~D. McCarthy and Athanase Papadopoulos, \emph{The mapping class group and a
  theorem of {M}asur-{W}olf}, Topology Appl. \textbf{96} (1999), no.~1, 75--84.

\bibitem[MW95]{MW95}
Howard~A. Masur and Michael Wolf, \emph{Teichm\"uller space is not {G}romov
  hyperbolic}, Ann. Acad. Sci. Fenn. Ser. A I Math. \textbf{20} (1995),
  259--267.

\bibitem[{Tam}11]{Phan}
T.~{Tam Nguyen Phan}, \emph{{On finite volume, negatively curved manifolds}},
  ArXiv e-prints (2011).

\bibitem[Val07]{Vale}
Alain Valette, \emph{Affine isometric actions on hilbert spaces and
  amenability}, Notes from the Course given at Erwin Schr\"odinger Institute of
  Vienna (2007).

\bibitem[Wol75]{Wolpert75}
Scott Wolpert, \emph{Noncompleteness of the {W}eil-{P}etersson metric for
  {T}eichm\"uller space}, Pacific J. Math. \textbf{61} (1975), no.~2, 573--577.

\bibitem[Wol86]{Wolpert86}
Scott~A. Wolpert, \emph{Chern forms and the {R}iemann tensor for the moduli
  space of curves}, Invent. Math. \textbf{85} (1986), no.~1, 119--145.

\bibitem[Wu11]{Wu12}
Yunhui Wu, \emph{Translation lengths of parabolic isometries of {CAT(0)} spaces
  and their applications}, preprint.

\end{thebibliography}

\end{document}